\newcommand{\N}{\ensuremath{\mathbb{N}}}
\newcommand{\bbN}{\ensuremath{\mathbb{N}}}
\newcommand{\bbZ}{\ensuremath{\mathbb{Z}}}
\newcommand{\Pcal}{\ensuremath{\mathcal{P}}}
\newcommand{\Rcal}{\ensuremath{\mathcal{R}}}
\newcommand{\wtS}{\widetilde{S}}
\newcommand{\wtG}{\widetilde{G}}
\newcommand{\wts}{\widetilde{s}}
\newcommand{\free}{\ast\!\! }
\DeclareMathOperator{\defi}{:\!=}
\theoremstyle{definition}
\theoremstyle{plain}
\newtheorem{theorem}{Theorem}[subsection]
\newtheorem{lemma}[theorem]{Lemma}
\newtheorem{thm}[theorem]{Theorem}
\newtheorem{remark}[theorem]{Remark}
\newtheorem{prob}{Problem}
\newtheorem{conj}{Conjecture}
\theoremstyle{remark}
\newenvironment{txteq*}
  {
    \begin{equation*}
    \begin{minipage}[t]{0.85\textwidth} 
    \em                                
  }
  {\end{minipage}\end{equation*}\ignorespacesafterend}
\title{Hamilton circles in Cayley graphs
\thanks
{{\it Key Words}: Cayley graphs, Hamilton circles, Infinite graph, Infinite groups}
\thanks {{\it Mathematics Subject Classification 2010}: 05C25, 05C45, 05C63, 20E06, 20F05, 37F20.
 }}
\author{Babak Miraftab \and Tim R\"uhmann \smallskip \and Department of Mathematics\\ University of Hamburg}
\date{\today}
\begin{document}
\maketitle

\begin{abstract}
For  locally finite infinite graphs the notion of  Hamilton cycles can be extended to Hamilton circles, homeomorphic images of~$S^1$ in the Freudenthal compactification.
In this paper we prove of a sufficient condition for the existence of Hamilton circles in locally finite Cayley graphs.  
\end{abstract}

\section{Introduction}

In 1969, Lov\'{a}sz, see \cite{BabaiLovasz}, conjectured that every finite connected vertex-transitive graph contains a Hamilton cycle except five known counterexamples.
As the  Lov\'{a}sz conjecture is still open, one might instead try to solve the, possibly easier, Lov\'{a}sz conjecture for finite Cayley graphs which states: 
Every finite Cayley graph with at least three vertices contains a Hamilton cycle.  
Doing so enables the use of group theoretic tools and more over one can ask for what generating sets a particular group contains a Hamilton cycle.  
There are a vast number of papers regarding the study of Hamilton cycles in finite Cayley graphs, see \cite{Dragan, Durnberger1983, commutatorsubgroup, wittedigraphs, p-group}, for a survey of the field see~\cite{wittesurvey}. 

As cycles are always finite, we need a generalization of Hamilton cycles for infinite graphs. 
We follow the topological  approach  of  \cite{diestelBook10noEE, TopSurveyI, Ends}, which extends  Hamilton cycles  in  a  sensible  way  by using the circles in the Freudenthal compactification~$| \Gamma | $ of a~$\Gamma$ graph  as infinite cycles.
There are already results on Hamilton circles in general infinite locally finite graphs, see \cite{AgelosFleisch, hamlehpott, Karl1, Karl2}.

It is worth remarking that the weaker version of the Lovasz's conjecture does not hold true for infinite groups.
For example it is straight forward to check that the Cayley graph of any free group with the standard generating set does not contain Hamilton circles, as they are trees. 

It is a known fact that every locally finite graph needs to be 1-tough to contain a Hamilton circle, see~\cite{AgelosFleisch}.  
Thus a way to obtain infinitely many Cayley graphs with no Hamilton circle is to amalgamate more than~$k$ groups over a subgroup of order~$k$. 
In 2009, Georgakopoulos~\cite{AgelosFleisch} asked if avoiding this might be enough to force the existence of Hamilton circles in locally finite graphs and proposed the following problem:

\begin{prob}{\rm\cite[Problem 2]{AgelosFleisch}} 
\label{conj1}
Let~$\Gamma$ be a connected Cayley graph of a finitely generated group.
Then~$\Gamma$ has a Hamilton circle unless there is a~$k\in\mathbb N$
such that the Cayley graph of~$\Gamma$ is the amalgamated product of more than~$k$
groups over a subgroup of order~$k$.
\end{prob}
\noindent In Section~\ref{disprove agelos} we show that Problem~\ref{conj1} is false. 

For a one-ended graph~$\Gamma$ it suffices to find a spanning two-way infinite path, a \emph{double ray},  to find a Hamilton circle of~${|\Gamma|}$.   
In 1959 Nash-Williams \cite{nash} showed that any Cayley graph of any infinite finitely generated abelian group admits a spanning double ray.
So we can say that Nash-Williams~\cite{nash} was one of the first who proved the existence of  Hamilton circles in an infinite class of Cayley graphs even though at the time this notion of Hamilton circles was not yet properly defined. 
We extend this result by showing that any Cayley graph of any finitely generated abelian group, besides~$\bbZ$ generated by~$\{{\pm}1\}$, contains a Hamilton circle in Section~\ref{abgrp}.
We extend this result also to an even larger class of infinite groups, see Section~\ref{results} for the details.

\section{Preliminaries}
\label{prelim}

\subsection{Groups}
\label{prelimgroup}
Throughout this paper~$G$ will be reserved for groups. 
For a group~$G$ with respect to the generating set~$S$, i.e.~$G= \langle S \rangle$, we denote the Cayley graph of~$G$ with respect to~$S$ by~$\Gamma(G,S)$ unless explicitly stated otherwise. 
For a set~$T \subseteq G$ we set~$T^{\pm} \defi T\cup T^{-1}$.
Through out this paper we assume that all generating sets are symmetric, i.e. whenever~$s \in S$ then~$s^{-1} \in S$.  
Thus if we add an element~$s$ to a generating set~$S$, we always also add the inverse of~$s$ to~$S$ as well. 

Suppose that $G$ is an abelian group.
A finite set  of elements  $\{g_i\}_{i=1}^n$ of $G$ is called \emph{linear dependent} if there exist integers $\lambda_i$ for $i=1,\ldots,n$, not all zero, such that $\sum_{i=1}^{n}\lambda_ig_i=0$.
A system of elements that does not have this property is called \emph{linear independent}. 
It is an easy observation that a set  containing elements of finite order is linear dependent.
The \emph{rank} of an abelian group is the size of a maximal independent set.
This is exactly the rank the torsion free part, i.e if~$G = \bbZ^n \oplus G_0$ then rank of~$G$  is $n$, where~$G_0$ is the torsion part of~$G$. 

Let~$G_1$ and~$G_2$ be two groups with subgroups~$H_1$ and~$H_2$ respectively such that there is an isomorphism~${\phi\colon H_1\to H_2}$.
The \emph{free product with amalgamation} is defined as

$$G_1\free_{H_1} G_2 \defi  \langle S_1\cup S_2\mid R_1\cup R_2\cup H_1\phi^{-1}(H_1)\rangle.$$
A way to present elements of a free product with amalgamation is the Britton's Lemma:

\begin{lemma} {\rm \cite[Theorem 11.3]{bogo}}
Let $G_1$ and $G_2$  be two groups with subgroups~${H_1\cong H_2}$ respectively. 
Let~$T_{i}$ be a left transversal\footnote{A \emph{transversal} is a system of representatives of left cosets of~$H_i$ in~$G_i$ and we always assume that~$1$ belongs to it.} of~$H_i$ for~${i=1,2}$.
Any element $x\in G_1\free_{H} G_2$ can be uniquely written in the form  $x=x_0x_1\cdots x_n$ with the following:
\begin{itemize}
\item[{\rm(i)}] $x_0\in H_1$.
\item[{\rm(ii)}]$x_j\in T_1\setminus 1$ or $x_i\in T_2 \setminus 1$ for $j\geq 1$ and the consecutive terms $x_j$ and $x_{j+1}$ lie in distinct transversals.
\end{itemize}
\end{lemma}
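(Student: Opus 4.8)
The plan is to establish this as the classical normal-form (Britton) theorem for amalgamated products, in two stages: existence of an expression of the stated shape, which is routine rewriting, and its uniqueness, which carries the real content. First I would identify $H_1$ with $H_2$ along $\phi$ and write $H$ for the resulting common subgroup, so that $G_1\free_H G_2=\langle S_1\cup S_2\mid R_1\cup R_2\cup\{\,h=\phi(h):h\in H_1\,\}\rangle$, and fix once and for all the transversals $T_1,T_2$ with $1\in T_1\cap T_2$. For existence, any element is represented by some finite product $g_1g_2\cdots g_m$ with each $g_k\in G_1\cup G_2$; deleting trivial syllables and merging consecutive syllables that lie in the same factor one may assume the factors alternate, and then, writing each $g_k$ as a coset representative times an element of $H$ and sliding the $H$--parts across the relation $h=\phi(h)$, a short induction on $m$ puts the product into the form $x_0x_1\cdots x_n$ with $x_0\in H$ and the $x_j$ ($j\ge 1$) non-trivial transversal elements from alternating transversals. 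Only uniqueness then remains, and I would prove it by van der Waerden's permutation trick rather than by invoking the Bass--Serre tree of the amalgam (which, constructed from scratch, essentially needs the same argument).

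Let $\Omega$ be the set of all finite \emph{reduced sequences} $(h;t_1,\dots,t_n)$ with $h\in H$, each $t_j\in(T_1\cup T_2)\setminus\{1\}$, and $t_j,t_{j+1}$ drawn from different transversals, the empty sequence $(1;\ )$ included. For $g\in G_1$ I would define a map $\lambda_g\colon\Omega\to\Omega$ by left-multiplying in the $G_1$--slot and re-reducing: form the product $g\cdot h$ (absorbing $t_1$ as well when $t_1\in T_1$), rewrite the result as a $T_1$--coset representative times an element of $H$, and read off the new sequence, with the convention $1\in T_1$ handling the case where the new representative is trivial and a cancellation occurs; for $g\in G_2$ define $\rho_g$ symmetrically. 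The crux is to check that $g\mapsto\lambda_g$ and $g\mapsto\rho_g$ are homomorphisms into $\operatorname{Sym}(\Omega)$ and that $\lambda_h=\rho_{\phi(h)}$ for all $h\in H$. Granting this, the presentation of $G_1\free_H G_2$ yields a homomorphism $\Psi\colon G_1\free_H G_2\to\operatorname{Sym}(\Omega)$ restricting to $\lambda$ on $G_1$ and to $\rho$ on $G_2$, and unwinding the definitions one sees that $\Psi(x)$ applied to the empty sequence returns exactly the data $(x_0,x_1,\dots,x_n)$ of any normal-form expression $x=x_0x_1\cdots x_n$. Hence two normal forms of the same element yield the same sequence and therefore coincide, which is precisely uniqueness.

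The genuinely laborious point is verifying $\lambda_{gg'}=\lambda_g\circ\lambda_{g'}$ for $g,g'\in G_1$ (and the mirror statement for $\rho$): this unwinds into a finite case analysis according to whether the sequence being acted on is empty, begins with a $T_1$--term, or begins with a $T_2$--term, and according to whether the coset representatives produced during re-reduction are trivial, i.e.\ whether cancellations occur. Each individual case is a direct computation using the decomposition $g=(\text{coset rep})\cdot(\text{element of }H)$, but there are several of them, and the normalisation $1\in T_i$ is exactly what makes the boundary cases fit together; this is where I expect essentially all the work to sit. By contrast the compatibility $\lambda_h=\rho_{\phi(h)}$ is immediate, since for $h\in H$ both maps act on a reduced sequence by left-translating its scalar part, and $\lambda_g$ is a bijection because $\lambda_{g^{-1}}$ is its two-sided inverse once the homomorphism property is in hand. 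Everything else is bookkeeping.
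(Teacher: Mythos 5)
First, note that the paper itself offers no proof of this lemma: it is quoted verbatim as \cite[Theorem 11.3]{bogo}, so there is no in-paper argument to compare yours against. Your chosen route --- existence by syllable rewriting plus uniqueness via van der Waerden's permutation trick on the set $\Omega$ of reduced sequences --- is the standard textbook proof of the normal form theorem for amalgamated products, and in outline it is the right one. You are also right that all the substance sits in the verification that $g\mapsto\lambda_g$ is a homomorphism; deferring that to ``a finite case analysis'' is acceptable as a plan, though it is precisely the part you have not actually done.

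There is, however, one concrete point at which your construction, as written, breaks: the direction of the coset decomposition. Your sequences $(h;t_1,\dots,t_n)$ carry the $H$-part on the \emph{left} (matching the statement's $x_0\in H_1$), and you act by \emph{left} multiplication, yet you rewrite $g\cdot h\cdot(t_1)$ as ``a $T_1$-coset representative times an element of $H$'', i.e.\ as $t'h'$ with $t'\in T_1$, $h'\in H$. That leaves $h'$ stranded between $t'$ and the tail $t_2\cdots t_n$, so ``reading off the new sequence'' is not a local operation --- the $H$-part must be pushed through the entire tail, altering every $t_j$ --- and your later claims collapse: $\lambda_h$ for $h\in H$ does \emph{not} merely translate the scalar slot (left multiplication by $H$ preserves right cosets $Ht_1$, not left cosets $t_1H$), and the case analysis for $\lambda_{gg'}=\lambda_g\circ\lambda_{g'}$ will not close up. The fix is to commit to the opposite convention: take the $t_j$ to represent right cosets $Hg$ (equivalently, replace $T_i$ by $T_i^{-1}$) and decompose as $g\cdot h\cdot(t_1)=h't'$ with $h'\in H$ first; then each $\lambda_g$ changes only the scalar and the first syllable, and the argument goes through. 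Incidentally, the same issue already infects the statement as transcribed in the paper: with $T_1$ a genuine left transversal ($G_1=\bigsqcup_{t\in T_1}tH_1$) one need not have $G_1=H_1T_1$ (e.g.\ $H=\langle(12)\rangle\le S_3$ with $T=\{e,(123),(23)\}$), so the form $x=x_0x_1$ with $x_0\in H_1$, $x_1\in T_1$ can fail to exist; either the transversal must be of right cosets or the $H$-factor must sit at the right end. Your proof should state, and then consistently use, whichever of these corrected conventions it adopts.
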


Let $G=\langle S\mid R\rangle$ be a group with subgroups~$H_1$ and~$H_2$ in such a way that there is an isomorphism~${\phi\colon H_1\to H_2}$.
We now insert a new symbol~$t$ not in~$G$ and we define the \emph{HNN-extension} of~${G\ast_{H_1}}$ as follows:
$${G\ast_{H_1}} \defi \langle S, t\mid R\cup t^{-1}H_1t \phi(H_1)^{-1}\rangle.$$

Throughout this paper we assume that any generating set~${S=\{s_1,\ldots,s_n\}}$ is \emph{minimal} in the following sense: 
Each~$s_i \in S$ cannot be generated by~${S \setminus \{s_i\}}$, i.e. we have that~${s_i \notin \langle s_j\rangle_{j \in \{1,\ldots, n\}\setminus \{i\}}}$.
We may do so because say ~$S^\prime \subseteq S$ is a minimal generating set of~$G$.  
If we can find a Hamilton circle~$C$ in~$\Gamma (G,S^\prime)$, then this circle~$C$  will still be a Hamilton circle in~$\Gamma (G,S)$. 
For this it is important to note that the number of ends of~$G$ and thus of~$\Gamma(G,S^\prime)$ does not change with changing the generating set to~$S$ by {\rm{\cite[Theorem 11.23]{mei}}}, as long as~$S$ is finite, which will always be true in this paper. 

We now cite a structure for finitely generated groups with two ends. 

\begin{thm}{\rm\cite[Theorem 5.12]{ScottWall}}\label{stallings} Let~$G$ be a finitely generated group. Then the following statements are 
  equivalent.
  \begin{enumerate}[\rm (i)]
    \item The number of ends of~$G$ is~$2$.
    \item $G$ has an infinite cyclic subgroup of finite index.
    \item $G=A \free_C B$ and~$C$ is finite and~$[A:C]=[B:C]=2$ or~$G=C\free_C$ with~$C$ is finite.
  \end{enumerate}
\end{thm}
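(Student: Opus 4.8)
The plan is to establish the cycle of implications $\mathrm{(ii)}\Rightarrow\mathrm{(i)}\Rightarrow\mathrm{(iii)}\Rightarrow\mathrm{(ii)}$. The implication $\mathrm{(ii)}\Rightarrow\mathrm{(i)}$ is the easiest: if $H\le G$ is infinite cyclic of finite index then, $G$ being finitely generated, the Cayley graph of $H$ with respect to a finite generating set is quasi-isometric to $\Gamma(G,S)$, since a finite-index subgroup carrying the restriction of a word metric is quasi-isometric to the whole group. As the number of ends is a quasi-isometry invariant and $\Gamma(\bbZ,\{\pm 1\})$ is the double ray, which has exactly two ends, $G$ has two ends. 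One may also argue without quasi-isometries: a finite set of left coset representatives of $H$ in $G$ together with a finite generating set of $H$ shows that removing a large enough ball from $\Gamma(G,S)$ leaves at most two infinite components, and at least two remain because $H$, being infinite cyclic, has two ends.

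For $\mathrm{(iii)}\Rightarrow\mathrm{(ii)}$ I would argue purely algebraically. In the amalgam case $G=A\free_C B$ with $C$ finite and $[A:C]=[B:C]=2$, the subgroup $C$ is normal in both $A$ and $B$ (being of index two), hence normal in $G=\langle A\cup B\rangle$; the quotient is $G/C\cong (A/C)\free(B/C)\cong\bbZ_2\free\bbZ_2$, the infinite dihedral group, which contains an index-two infinite cyclic subgroup. Pulling this subgroup back to $G$ gives $N\le G$ with $[G:N]=2$ and a short exact sequence $1\to C\to N\to\bbZ\to 1$ that splits because $\bbZ$ is free, so $N\cong C\rtimes\bbZ$. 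As $C$ is finite, the conjugation action of $\bbZ=\langle x\rangle$ on $C$ has some finite order $m$, hence $\langle C,x^m\rangle\cong C\times\bbZ$ and the factor $\langle x^m\rangle\cong\bbZ$ has finite index in $N$, thus in $G$. The HNN case $G=C\free_C=\langle C,t\mid t^{-1}ct=\phi(c),\ c\in C\rangle$, with $\phi$ an automorphism of $C$, is similar and shorter: here $t^{-1}Ct=C$ and $C$ normalises itself, so $C\triangleleft G$ with $G/C\cong\langle t\rangle\cong\bbZ$; the sequence $1\to C\to G\to\bbZ\to 1$ splits, and the same absorption of the finite conjugation action yields an infinite cyclic subgroup of finite index.

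The implication $\mathrm{(i)}\Rightarrow\mathrm{(iii)}$ is where the real work lies, and here I would invoke Stallings' theorem on ends of groups: a finitely generated group with more than one end splits nontrivially over a finite subgroup, i.e.\ $G=A\free_C B$ with $C\ne A,B$, or $G$ is an HNN extension of a group $H$ over $C\le H$, with $C$ finite in either case. It then remains to show that having \emph{exactly} two ends, rather than infinitely many, forces the restricted shape of $\mathrm{(iii)}$: in Bass--Serre terms the tree $T$ of the splitting has a vertex of degree $[A:C]$ and one of degree $[B:C]$ in the amalgam case, respectively degree $[H:C]+[H:\phi(C)]$ in the HNN case, and a vertex of degree at least three forces $G$ to have at least three, hence infinitely many, ends; so two-endedness forces $[A:C]=[B:C]=2$, respectively $H=C$ (so $\phi$ is an automorphism of $C$), which is precisely the form in $\mathrm{(iii)}$. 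The main obstacle is exactly this last step: relating the number of ends of $G$ to the combinatorics of $T$ needs care, since the vertex stabilisers $A,B,H$ need not be finite and one cannot simply assert a quasi-isometry between $G$ and $T$; instead one runs a direct counting argument using the normal form of the splitting (bounding the number of infinite complementary components of large balls in $\Gamma(G,S)$), or cites the standard refinement of Stallings' theorem classifying the splittings of two-ended groups.
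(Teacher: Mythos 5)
The paper does not prove this statement at all: it is quoted verbatim as Theorem 5.12 of Scott and Wall, so there is no internal proof to compare yours against. Judged on its own terms, your cycle of implications is organised sensibly, and two of the three arrows are essentially complete. For (ii)$\Rightarrow$(i) the quasi-isometry argument is standard and correct. For (iii)$\Rightarrow$(ii) your algebra is right: $C$ is normal in $A$ and $B$ (index two) and hence in $G=A\free_C B$, the quotient is $\bbZ_2\free\bbZ_2\cong D_\infty$, the pulled-back index-two subgroup $N$ sits in a split extension $1\to C\to N\to\bbZ\to 1$, and killing the finite-order conjugation action produces $\langle x^m\rangle\cong\bbZ$ of finite index; the HNN case with $\phi\in\mathrm{Aut}(C)$ and $G\cong C\rtimes\bbZ$ is handled the same way.

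The genuine gap is exactly where you say it is, in (i)$\Rightarrow$(iii). Stallings' theorem only gives \emph{some} splitting over a finite subgroup with $[A:C],[B:C]\ge 2$ (or a nontrivial HNN extension); the assertion that a Bass--Serre vertex of degree at least three forces infinitely many ends is true but is not a formal consequence of anything you have written, precisely because the vertex groups $A$, $B$, $H$ may be infinite and $G$ need not be quasi-isometric to the tree. To close it you must actually run the counting argument you allude to: each edge of the tree corresponds to a coset $gC$, a finite vertex set whose removal separates $\Gamma(G,S)$ into pieces indexed by the halves of the tree, so if $[A:C]\ge 3$ (or the HNN base properly contains an associated subgroup) one exhibits a finite set whose complement has at least three infinite components, whence infinitely many ends. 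Alternatively, the route actually taken in sources like Scott--Wall proves (i)$\Rightarrow$(ii) first, via the action of $G$ on its two-point end space and the resulting finite-by-($\bbZ$ or $D_\infty$) structure, and then derives (iii) from (ii); that avoids having to control the shape of an arbitrary Stallings splitting. As written, your proposal correctly names the key tool but defers the one step that carries the real content to an unproved claim or an external citation.
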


\subsection{Graphs}
\label{prelimgraph}

Throughout this paper~$\Gamma$ will be reserved for graphs. 
In addition to the notation of paths and cycles as sequences of vertices such that there are edges between successive vertices we use the notation of \cite{commutatorsubgroup, wittesurvey} for constructing Hamilton paths and Hamilton cycles and circles which uses edges rather than vertices.
We give some basic examples of this definition here: 
For that let~$g$ and~$s_i,~i \in \bbZ$, be elements of some group. 
In this notation $g[s_1]^k$ denotes the concatenation of~$k$ copies of~$s_1$ from the right starting from~$g$ which translates to the path~$g,(gs_1),\ldots,(gs_1^k)$ in the usual notation. 
Analogously~${[s_1]^k g}$ denotes the concatenation of~$k$ copies of~$s_1$ starting again from~$g$ §from the left.
In addition~$g[s_1,s_2,\ldots]$ translates to be the ray~$g,(gs_1),(gs_1s_2),\ldots$ and 
$$[ \ldots , s_{-2},s_{-1}]g[s_1,s_2, \ldots ]$$
translates to be the double ray 
$$\ldots, (gs_{-2}s_{-1}),(gs_{-1}),g,(gs),(gs_1s_2),\ldots$$ 
When discussing rays we extend the notation of~$g[s_1,\ldots, s_n]^k$ to~$k$ being countably infinite and write~$g[s_1,\ldots, s_2]^\bbN$ and the analogue for double rays.  
Sometimes we will use this notation also for cycles. 
Stating that~$g[c_1,\ldots,c_k]$ is a cycle means that~$g[c_1,\ldots, c_{k-1}]$ is a path and that the edge~$c_k$ joins the vertices~${g c_1\cdots c_{k-1}}$ and~${g}$.

For a graph~$\Gamma$ let the induced subgraph on the vertex set~$X$ be called~$\Gamma[X]$.
Throughout this paper we use Theorem \ref{stallings} to characterize the structure of two ended groups, see Section \ref{HCsection}  for more details. 
It is still important to pay close attention to the generating sets for those groups though, as the following example shows. 
Take two copies of $\bbZ_2$, with generating sets~$\{a\}$ and  $\{b\}$, respectively.
Now consider the free product of them. 
It is obvious that this Cayley graph with generating set $\{a,b\}$ does not contain a Hamilton circle. 
The free product of $\bbZ_2$ with itself, on the other hand, is isomorphic to $D_\infty$, which can represented  by~${D_\infty = \langle a,b \mid a^2 =1, aba=b^{-1} \rangle }$. 
It is easy to see that the Cayley graph of $D_\infty$ with this generating set contains a Hamilton circle. 

For a graph~$\Gamma$ we denote the Freudenthal compactification of~$\Gamma$ by~$|\Gamma|$. 
A homeomorphic image of~$[0,1]$ in the topological space~$|\Gamma |$ is called \emph{arc}. 
A \emph{Hamilton arc} in~$\Gamma$ is an arc including all vertices of~$\Gamma$. 
So a Hamilton arc in a graph always contains all ends of the graph. 
By a \emph{Hamilton circle} in~$\Gamma$, we mean  a homeomorphic image of the unit circle in~$|\Gamma|$ containing all vertices of~$\Gamma$.
A Hamilton arc whose image in a graph is connected is a \emph{Hamilton double ray}.
It is worth mentioning that an uncountable graph cannot contain a Hamilton circle. 
To illustrate, let~$C$ be a Hamilton circle of graph~$\Gamma$. 
Since~$C$ is homeomorphic to~$S^1$, we can assign to every edge of~$C$ a rational number. 
Thus we can conclude that~$V(C)$ is countable and so~$\Gamma$ is countable. 
Hence in this paper, we assume that all groups are countable.
In addition we will only consider groups with locally finite Cayley graphs in this paper so we assume that all generating sets~$S$ will be finite.\footnote{For not locally finite graphs the Freudenthal compactification, also called {\sc Vtop}, of a graph~$\Gamma$ is less useful as the space~$|\Gamma|$ might not be Hausdorf, which would be nice property to have. One might consider different topologies like {\sc Etop} or {\sc Mtop} for instead of {\sc Vtop} for non locally finite~$\Gamma$.}

\section{Hamilton circle}
\label{HCsection}
In this section we prove sufficient conditions for the existence of Hamilton circles in Cayley graphs. 
In Section~\ref{abgrp} we take a look at abelian groups. 
Section~\ref{tools} contains basic lemmas and structure theorems used to prove our main results which we prove in the Section~\ref{results}. 

\subsection{Abelian Groups}
\label{abgrp}
In the following we will examine abelian groups as a simple starting point for studying Hamilton circles in infinite Cayley graphs. 
Our main goal in this section is to extend a well-known theorem of Nash-Williams from one-ended abelian groups to two ended abelian groups by a simple combinatorial argument. 
First,  we cite a known result for finite abelian groups.

\begin{lemma}\label{HCabelian}{\rm{\cite[Corollary 3.2]{Marusic}}} 
Let $G$ be a finite abelian group with at least three elements. 
Then any Cayley graph of $G$ has a Hamilton cycle.
\end{lemma}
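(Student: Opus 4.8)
The plan is to prove, by induction on $|G|$, a statement stronger than the lemma itself: that every connected Cayley graph $\Gamma$ of a finite abelian group with $|G|\ge3$ is \emph{Hamilton-connected} if $\Gamma$ is not bipartite and \emph{Hamilton-laceable} if $\Gamma$ is bipartite (that is, it contains a spanning path between any two prescribed vertices, respectively between any two prescribed vertices lying in different classes of the bipartition). The lemma is then the special case obtained by closing such a spanning path into a cycle. As in the paragraph preceding the statement, I would first reduce to the case where $S$ is a minimal generating set.

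For the base of the induction one checks the strengthened statement by hand for $|G|\le4$, where $\Gamma$ is a triangle, a $4$-cycle or $K_4$. For the inductive step I would assume $|S^{\pm}|\ge3$; otherwise $\Gamma$ is a cycle $C_{|G|}$ with $|G|\ge3$, or the $4$-cycle $\Gamma(\bbZ_2^2,\{a,b\})$, and we are back in the base case. Using minimality of $S$, choose $s\in S$ so that $H\defi\langle S\setminus\{s,s^{-1}\}\rangle$ is a proper subgroup with $|H|\ge3$: properness holds for every $s$ because deleting a generator (together with its inverse) from a minimal generating set leaves an irredundant set, and $|H|\ge3$ can be arranged by keeping inside $H$ some generator of order $\ge3$ when $G$ is not elementary abelian, while for $G=\bbZ_2^k$ with $k\ge3$ one automatically has $|H|=2^{k-1}\ge4$. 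Put $m\defi[G:H]\ge2$; then $G$ is the disjoint union of the cosets $H,sH,\dots,s^{m-1}H$, and cyclically consecutive cosets are joined by the perfect matching of edges labelled $s$. Since $|H|\ge3$, the inductive hypothesis applies to $\Gamma(H,S\setminus\{s,s^{-1}\})$.

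To assemble a Hamilton cycle of $\Gamma$ I would take a suitable Hamilton path $P$ of $\Gamma(H,S\setminus\{s,s^{-1}\})$ from $1$ to some $b\in H$, note that its translate $s^jP$ is a Hamilton path of the coset $s^jH$, and concatenate $P,\ sP^{-1},\ s^2P,\ s^3P^{-1},\dots$, running through the $m$ cosets in cyclic order, reversing direction in every other coset and hopping between consecutive cosets along an $s$-edge. Because $G$ is abelian, every hop between \emph{consecutive} cosets automatically arrives at the endpoint of the next path, so the only genuine constraint is closing the last coset back onto the first; this forces $b=s^{-m}$ when $m$ is odd, and $s^m=1$ when $m$ is even. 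If $s^m=1$ then $G=\langle s\rangle\times H$, so $\Gamma$ contains $C_m\,\square\,\Gamma(H,S\setminus\{s,s^{-1}\})$ and a Hamilton cycle of the torus grid $C_m\,\square\,C_{|H|}$ finishes it. Otherwise $m$ is odd and we need a Hamilton \emph{path} of $\Gamma(H,S\setminus\{s,s^{-1}\})$ between the prescribed vertices $1$ and $s^{-m}\ne1$ --- exactly what the strengthened hypothesis supplies and what bare Hamiltonicity of $H$ would not. Hamilton-connectedness and -laceability of $\Gamma$ itself would be obtained by the same coset concatenation with both endpoints prescribed from the start, together with the (fiddly) parity bookkeeping in the bipartite case.

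I expect the real obstacle to be precisely this endpoint control when closing up: arranging that the coset paths can be chosen with endpoints compatible with closing the global path into a cycle --- equivalently, controlling the ``net voltage'' of a lift of a Hamilton cycle of the quotient --- and tracking the bipartite/non-bipartite dichotomy correctly through the induction. This is exactly why the statement must be strengthened to Hamilton-connectedness and -laceability before the induction can run, a phenomenon familiar from the Chen--Quimpo theorem on Cayley graphs of abelian groups.
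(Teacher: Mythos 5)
The paper does not prove this statement at all: it is quoted verbatim from Maru\v{s}i\v{c} (and the strengthened form you aim for --- Hamilton-connectedness/laceability of connected Cayley graphs of finite abelian groups --- is precisely the Chen--Quimpo theorem, a relative of which the paper also only cites, as Lemma~\ref{hamilton}). So there is no in-paper argument to compare with; what you have written is an outline of the standard inductive proof of the cited result, and the overall strategy (strengthen the statement, peel off one generator $s$, decompose $G$ into the cyclically ordered cosets of $H=\langle S\setminus\{s,s^{-1}\}\rangle$, and thread translated Hamilton paths through them) is the right one.

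The outline does, however, contain one concretely false step and defers exactly the steps that carry the difficulty. The dichotomy ``closing up forces $b=s^{-m}$ when $m$ is odd, and $s^m=1$ when $m$ is even'' --- and in particular the inference ``otherwise $m$ is odd'' --- is wrong: $m$ can be even with $s^m\neq 1$. For instance, in $G=\bbZ_4\times\bbZ_2$ with the minimal generating set $\{(1,0),(1,1)\}$, taking $s=(1,0)$ gives $H=\langle(1,1)\rangle$ of index $m=2$ and $s^2=(2,0)\neq 0$, so neither branch of your case split applies. This matters structurally: if every coset uses the \emph{same} path $P\colon 1\to b$, alternately reversed, the net displacement after an even number of cosets is $(bb^{-1})^{m/2}=1$, which can never equal the required $s^{-m}\neq 1$; hence for even $m$ with $s^m\neq 1$ you are forced to choose Hamilton paths with \emph{different} prescribed endpoints $h_0,\dots,h_{m-1}\neq 1$ in the several cosets, subject to $h_0h_1\cdots h_{m-1}=s^{-m}$, and in the bipartite case additionally subject to each $h_j$ lying in the correct colour class. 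That solvability argument (together with the parity bookkeeping that decides whether the inductive object is Hamilton-connected or only laceable, and the verification that $1$ and $s^{-m}$ lie in opposite classes when $H$'s Cayley graph is bipartite) is the actual content of the theorem; you correctly identify it as the obstacle but do not carry it out. As it stands the proposal is a plausible plan, not a proof.
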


\noindent In the following we extend the previously mentioned theorem of Nash-Williams from finitely one-ended abelian groups to the maximal set of finitely generated abelian groups. 
For that we first state the theorem of Nash-Williams.

\begin{thm}\label{Nash}{\rm{\cite[Theorem 1]{nash}}} 
Let~$G$ be a finitely generated abelian group with exactly one end. 
Then any Cayley graph of~$G$ has a Hamilton circle.
\end{thm}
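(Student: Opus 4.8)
The plan is to reduce the statement to the construction of a \emph{spanning double ray}. Recall, as observed in the introduction, that a one-ended locally finite graph that possesses a spanning double ray automatically has a Hamilton circle: both tails of the double ray $D$ converge to the unique end $\omega$, so the closure of $D$ in $|\Gamma|$ is $D\cup\{\omega\}$, a homeomorphic image of $S^1$ meeting every vertex. So it suffices to prove that every Cayley graph of $G$ has a spanning double ray. By the classification of finitely generated abelian groups, $G\cong\bbZ^n\oplus F$ with $F$ finite; since $G$ has exactly one end, Theorem~\ref{stallings} forces the rank $n$ of $G$ to be at least $2$, because rank $0$ yields a finite group and rank $1$ a group with an infinite cyclic subgroup of finite index, hence with two ends.

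I would prove, by induction on $n\ge 1$, the stronger statement that \emph{every} Cayley graph of a finitely generated abelian group of rank $n$ has a spanning double ray; the theorem then follows from the cases $n\ge 2$ by the reduction above. The base case $n=1$ is postponed. For the inductive step, let $G\cong\bbZ^n\oplus F$ with $F$ finite and $n\ge 2$, and fix a finite symmetric generating set $S$ of $G$; pick $s\in S$ of infinite order, which exists because the image of $S$ generates the nonzero group $G/F\cong\bbZ^n$. Put $N\defi\langle s\rangle\cong\bbZ$. The cosets of $N$ partition the vertex set of $\Gamma(G,S)$, and each coset, together with its $s$-labelled edges, is a double ray, which we call a \emph{fibre}; moreover, whenever two cosets are adjacent in $\bar\Gamma\defi\Gamma(G/N,\bar S)$ (with $\bar S$ the image of $S$) through an edge labelled $\bar t$, the $t$-labelled edges of $\Gamma(G,S)$ running between the two fibres form a perfect matching, whose edges we call \emph{rungs}. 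As $G/N$ is finitely generated abelian of rank $n-1\ge 1$, the induction hypothesis provides a spanning double ray $\bar D$ of $\bar\Gamma$.

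It remains to lift $\bar D$. Discarding every generator except $s$ and, for every edge of $\bar D$, one generator of $S$ realising it, the preimage of $\bar D$ becomes a spanning subgraph $\Lambda$ of $\Gamma(G,S)$ consisting of the fibres over the vertices of $\bar D$ together with the rungs between consecutive fibres. Since $G$ is abelian, $s$ commutes with each chosen generator, so after reindexing the levels within each fibre by a shift that absorbs the accumulated ``shear'' the remaining rungs join equal levels; hence $\Lambda$ is isomorphic to the standard square grid, that is, to the Cayley graph of $\bbZ^2$ with generating set $\{(1,0),(-1,0),(0,1),(0,-1)\}$. It therefore suffices to find a spanning double ray of the grid. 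Split $\bbZ^2$ into the half-planes $\{y\ge 1\}$ and $\{y\le 0\}$. Each of these is one-ended and carries a spanning ray built by a ``staircase'' enumeration: the shells $S_k\defi\{(x,y):y\ge 1,\ \max(|x|,y-1)=k\}$ are finite, every $S_k$ with $k\ge 1$ is a path whose two endpoints lie in the bottom row $\{y=1\}$, consecutive shells have adjacent endpoints, and $S_0=\{(0,1)\}$ is adjacent to an endpoint of $S_1$; traversing $S_0,S_1,S_2,\dots$ in succession and reversing direction at each step exhausts $\{y\ge 1\}$ by a single ray starting at $(0,1)$, and symmetrically one obtains a spanning ray of $\{y\le 0\}$ starting at $(0,0)$. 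Joining these two rays by the edge between $(0,1)$ and $(0,0)$ yields a spanning double ray of the grid, hence of $\Lambda$, hence of $\Gamma(G,S)$; since $G$ is one-ended this is, by the first paragraph, a Hamilton circle.

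The main obstacle is the base case $n=1$: every Cayley graph of a two-ended abelian group $H\cong\bbZ\oplus F'$, with $F'$ finite, has a spanning double ray. Here quotienting by an infinite-cyclic subgroup would produce a finite quotient, so a separate argument is needed; I would cut $H$ into long consecutive ``blocks'' along the $\bbZ$-direction, find in each block a Hamilton path whose two endpoints are matched with those of the neighbouring blocks, and concatenate these paths into a spanning double ray. Making the Hamilton paths exist and fit together --- uniformly in the, a priori arbitrary, finite generating set of $H$ --- is the genuinely delicate point, and it is here that Hamiltonicity of finite abelian Cayley graphs (Lemma~\ref{HCabelian}) is used; everything else above is a formal reduction. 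A lesser technical point is to carry out the shell enumeration precisely enough that consecutive shells are always adjacent, and to verify that the ``shear'' in the description of $\Lambda$ is indeed removed by the reindexing.
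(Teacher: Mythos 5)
First, note that the paper does not actually prove this theorem: it is cited from Nash--Williams, and the paper only remarks afterwards that it can be proved ``by the same arguments'' as Theorem~\ref{Z}, i.e.\ by the successive edge-replacement construction (start with the double ray $[s_1^{-1}]^{\bbN}1[s_1]^{\bbN}$ and repeatedly replace every other edge by a path $[s_j]^{i}[\cdot][s_j^{-1}]^{i}$ sweeping through the cosets of each new generator). Your route is genuinely different: an induction on the rank, quotienting by $\langle s\rangle$ for an infinite-order generator $s$, lifting a spanning double ray of the quotient to a subgraph of $\Gamma(G,S)$, and observing that (by commutativity) this lift is the square grid, which you then span by a staircase enumeration of two half-planes. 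The inductive step as you describe it is sound: the rungs do form perfect matchings commuting with the $s$-shift, the shear is removed by choosing coset representatives along $\bar D$, the shells $S_k$ are paths with endpoints in the bottom row and consecutive shells are adjacent, so the grid argument goes through. This reduction is arguably cleaner and more modular than the paper's implicit one.

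However, there is a genuine gap: your induction bottoms out at $n=1$, the statement that every Cayley graph of a two-ended abelian group $\bbZ\oplus F'$ has a spanning double ray, and you do not prove it --- you only sketch a plan (cut into blocks, find compatible Hamilton paths in each block) and yourself flag the fitting-together of these paths, uniformly over arbitrary finite generating sets, as ``the genuinely delicate point.'' This is not a minor verification: for a general generating set the blocks are not cosets of a finite subgroup and the inter-block edges depend on the generators, so the existence of Hamilton paths with prescribed, matching endpoints needs a real argument; Lemma~\ref{HCabelian} alone (Hamilton cycles in finite abelian Cayley graphs) does not provide paths with controlled endpoints. The missing base case is exactly the content of the paper's Theorem~\ref{Z}: its proof builds, for a rank-one abelian group with generators $s_1,\dots,s_\ell$, spanning double rays $R_2,\dots,R_{\ell-1}$ of the successive subgroups $\langle s_1,\dots,s_j\rangle$ by replacing every other edge of $R_{j-1}$ with a path $[s_j]^{i}[\cdot][s_j^{-1}]^{i}$, where $i$ is minimal with $s_j^{i+1}\in\langle s_1,\dots,s_{j-1}\rangle$ (such an $i$ exists precisely because the rank is one); one further iteration of the same replacement yields the spanning double ray $R_\ell$ of the whole group. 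Until you supply this (or an equivalent) argument, the proof is incomplete, since every instance of your induction for $n\ge 2$ ultimately relies on it.
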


It is obvious that the maximal class of groups to extend Theorem \ref{Nash} to cannot contain~$\Gamma(\mathbb{Z},\{\pm1\})$, as this it cannot contain a Hamilton circle. 
In Theorem \ref{Z} we prove that this is the only exception.    
      
\begin{thm}\label{Z}
Let~$G$ be an infinite finitely generated abelian group. 
Then any Cayley graph of~$G$ has a Hamilton circle except~$\Gamma(\mathbb{Z},\{1\})$.
\end{thm}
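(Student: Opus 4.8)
The structure theorem for finitely generated abelian groups gives $G \cong \bbZ^n \oplus G_0$ with $G_0$ finite. Since $G$ is infinite we have $n \ge 1$, and I would split the argument according to the number of ends, which for abelian groups is determined by the rank: $G$ has exactly one end when $n \ge 2$, and exactly two ends when $n = 1$. The case $n \ge 2$ is handled directly by Theorem \ref{Nash} (Nash-Williams), so there is nothing to prove there. The whole content of the theorem is therefore the two-ended case $n = 1$, i.e.\ $G = \bbZ \oplus G_0$ with $G_0$ finite, and I must produce a Hamilton circle for every finite generating set $S$ of such a $G$, the sole exception being $G_0$ trivial with $S = \{\pm 1\}$ (which gives a double ray, not a circle — a $2$-regular tree).

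For the two-ended case the plan is a direct combinatorial construction of a Hamilton double ray, which in a two-ended graph closes up to a Hamilton circle once it uses exactly the two ends. Pick a generator of the $\bbZ$-factor and let $t \in S$ be an element whose projection to $\bbZ$ is nonzero; write $N = \bbZ \oplus G_0$ as a disjoint union of the "slabs" $L_k = \{k\} \oplus G_0$ for $k \in \bbZ$ (more precisely, cosets of a finite-index $\bbZ$-subgroup if one wants $t$ to shift between consecutive slabs). Each slab is a coset of a finite abelian group, so by Lemma \ref{HCabelian} the relevant finite Cayley graph on a slab — or on a bounded union of a few consecutive slabs — has a Hamilton cycle, provided that finite group has at least three elements. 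The idea is then to traverse the slabs in order $\ldots, L_{-1}, L_0, L_1, \ldots$, inside each slab following (most of) a Hamilton path obtained by deleting one edge of the Hamilton cycle given by Lemma \ref{HCabelian}, and using copies of the generator $t$ to step from the exit vertex of $L_k$ to the entry vertex of $L_{k+1}$. Concatenating these finite paths with the connecting $t$-edges yields a spanning double ray $[\ldots]g[\ldots]$ in the notation of Section \ref{prelimgraph}, and since $\Gamma(G,S)$ has exactly two ends this double ray converges to the two distinct ends in its two tails, hence its closure in $|\Gamma(G,S)|$ is a Hamilton circle.

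The main obstacle is the bookkeeping at the slab boundaries: one must choose, for each $k$, a Hamilton path of the finite graph on $L_k$ (or on a small block of slabs) whose two endpoints are compatible with the $t$-edges entering from $L_{k-1}$ and leaving to $L_{k+1}$, i.e.\ one needs some freedom in where the Hamilton cycle of Lemma \ref{HCabelian} can be "cut". I would handle this by observing that the finite abelian Cayley graph on a slab is vertex-transitive, so a Hamilton cycle can be rotated to start at any prescribed vertex, and that one may also enlarge the "unit" from a single slab to a block of $\ell$ consecutive slabs (for a suitable fixed $\ell$ depending on the projections of the generators in $S$), which restores enough elements to apply Lemma \ref{HCabelian} even when individual slabs are too small. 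The genuinely exceptional configuration is $G = \bbZ$, $S = \{\pm 1\}$: here every slab is a single point, no enlargement produces a finite Cayley graph with a Hamilton cycle that reuses no vertex, and the graph is literally the double ray, which is an arc and not a circle — exactly the stated exception. Finally I would remark that in the border case $G_0$ trivial but $|S| \ge 4$ (so $S$ contains $\pm a$ and $\pm b$ with $a,b \ge 1$ distinct), the block construction with $\ell = b$ already works, so $\Gamma(\bbZ, \{\pm 1\})$ is indeed the only exception.
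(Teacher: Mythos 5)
Your reduction to the two-ended case $n=1$ via Theorem \ref{Nash} matches the paper, but the core of your two-ended argument rests on a false topological claim: that a spanning double ray whose two tails converge to the two distinct ends ``closes up'' to a Hamilton circle. It does not. The closure in $|\Gamma|$ of such a double ray $R$ is $R\cup\{\omega_1,\omega_2\}$ with $\omega_1\neq\omega_2$, which is homeomorphic to $[0,1]$ --- a Hamilton \emph{arc} in the paper's terminology, not a circle. (A spanning double ray closes to a circle only in a \emph{one}-ended graph, where both tails converge to the same end.) The theorem's own exception is the counterexample to your claim: $\Gamma(\bbZ,\{\pm 1\})$ is itself a spanning double ray with one tail in each end, yet it has no Hamilton circle. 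So your slab-by-slab construction of a single spanning double ray, even with all the boundary bookkeeping carried out, would only establish a Hamilton arc.

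What is actually needed in the two-ended case is a pair of \emph{disjoint} spanning double rays, each with a tail in each end; their union together with the two ends is the circle. This is exactly what the paper's Lemma \ref{Cylinder} extracts from your slab picture (each finite Hamilton cycle is split into two vertex-disjoint paths routed in parallel through consecutive slabs), and what the paper's proof of Theorem \ref{Z} does by hand: it inductively builds one spanning double ray $R_{\ell-1}$ of $\langle s_1,\ldots,s_{\ell-1}\rangle$ by replacing every other edge with a detour through the cosets of the new generator, and only at the last generator $s_\ell$ splits the construction into two disjoint double rays $\mathcal{P}_1$ and $\mathcal{P}_2$ covering complementary cosets, whose union is the Hamilton circle. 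Your approach could be repaired by feeding the slab decomposition into Lemma \ref{Cylinder}, but that also forces you to confront a secondary issue you gloss over: the induced graph on a single slab $\{k\}\oplus G_0$ need not be connected, let alone Hamiltonian, when $S\cap G_0$ fails to generate $G_0$, so the choice of blocks and of the matchings between them must be made compatible with conditions (i)--(iii) of that lemma. As written, the argument proves the wrong statement.
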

      
\begin{proof}
By the fundamental theorem of finitely generated abelian groups \cite[5.4.2]{scott}, one can see that~$G\cong \mathbb Z^n\oplus G_0$ where~$G_0$ is the torsion part of~$G$ and~$n\in\mathbb N$.
It follows from~\cite[lemma 5.6]{ScottWall} that the number of ends of~$\bbZ^n$ and~$G$ are equal. 
We know that the number of ends of~$\bbZ^n$ is one if~$n \geq 2$ and two if~$n =1$. 
By Theorem \ref{Nash} we are done if~${n  \geq 2}$.  
So we can assume that~$G$ has exactly two ends.

Now suppose that~$S=\{s_1,\ldots,s_\ell\}$ generates~$G$. 
Without loss generality assume that the order of~$s_1$ is infinite.
Let~$i$ be the smallest natural number such that~$s_2^{i+1}\in\langle s_1\rangle$.
Since the rank of~$G$ is one, we can conclude that~$\{s_1,s_2\}$ are dependent and thus such an~$i$ exists.
In the following we define a sequence of double rays. 
We start with the double ray~${R_1 =  [s_1^{-1}]^\bbN 1 [s_1]^\bbN}$.  
Now we replace every other edge of~$R_1$ by a path to obtain a double ray spanning~${\langle s_1, s_2 \rangle}$.
The edge~$1s_1$ will be replaced by the path~${[s_2]^i[s_1][s_2^{-1}]^{i}}$.
We obtain the following double ray:
 \[
 R_2=\cdots  [s_2]^{-i}[s_1^{-1}][s_2]^{i}[s_1^{-1}]1[s_2]^{i}[s_1][s_2^{-1}]^{i}[s_1] \cdots 
 \]  

Note that~$R_2$ spans~$\langle s_1,s_2 \rangle$. 
We will now repeat this kind of construction for additional generators. 
For simplicity we denote~$R_2$ by~${[\ldots ,y_{-2},y_{-1}]1[y_1,y_2,\ldots]}$ with~${y_k\in\{s_1,s_2\}^{\pm}}$ for every~$k \in \mathbb Z \setminus \{0\}$.  
As above let~$j \in \N$ be minimal such that~${s_3^{j+1} \in\langle s_1,s_2 \rangle}$. 
We now define the double ray
 \[R_3= \cdots [s_3^{-1}]^j[y_{-2}][s_3^{j}][y_{-1}]1[s_3^j][y_1][s_3^{-1}]^j[y_2] \cdots .\]  
We now repeat the process until we have defined the double ray~$R_{\ell-1}$, say
\[ {{R_{\ell-1}}=[\ldots,x_{-2},x_{-1}]1[x_1,x_2,\ldots]}\]
with~${x_k\in\{s_1,\ldots,s_{\ell-1}\}^{\pm}}$ for every~${k\in\mathbb Z\setminus\{0\}}$.
Now let~$u$ be the smallest natural number such that~$s_\ell^{u+1}\in\langle s_1,\ldots,s_{\ell-1}\rangle$. 
Now, put 
\[ \mathcal{P}_1=\cdots[s_\ell^{-1}]^{u-1}[x_{-2}][s_\ell]^{u-1}[x_{-1}]1[s_\ell]^{u-1}[x_1][s_\ell^{-1}]^{u-1}[x_2]\cdots \]
and 
 \[\mathcal{P}_2=[\ldots ,x_{-2},x_{-1}]s_\ell^{u}[x_1,x_2,\ldots].\]    
It is not hard to see that~$\mathcal P_1\cup\mathcal P_2$ is a Hamilton circle of~$\Gamma(G,S)$.
\end{proof}

\begin{remark}
One can prove Theorem {\rm \ref{Nash}} by same the arguments used in the above proof of Theorem {\rm \ref{Z}}. 
\end{remark}

\subsection{Structure Tools}
\label{tools}

In this section we assemble all the most basic tools to prove our main results. 
The our most important tools are Lemma~\ref{Cylinder} and Lemma~\ref{altern_cylinder}. 
In both Lemmas we  prove that a given graph~$\Gamma$ contains a Hamilton circle if it admits a partition of  its vertex set into infinitely many finite sets~${X_i, i \in \bbZ}$,  all of the same size which contain some special cycle and such that~$\Gamma$ connects these cycles in a useful way, see Lemma \ref{Cylinder} and \ref{altern_cylinder} for details.

\begin{lemma}\label{Cylinder}
Let~$\Gamma$ be a graph that admits a partition of its vertex set into finite sets~$X_i, ~i \in \bbZ$, fulfilling the following conditions:

\begin{enumerate}[\rm (i)]
\item~$\Gamma[X_i]$ contains a Hamilton cycle~$C_i$ or~$\Gamma[X_i]$ is isomorphic to~$K_2$. 
\item For each~${i \in \bbZ}$ there is a perfect matching between~$X_i$ and~$X_{i+1}$. 
\item There is a~$k \in \bbN$ such that for all~$i,j \in \bbZ$ with~${|i -j| \geq k}$ there is no edge in~$\Gamma$ between~$X_i$ and~$X_{j}$.
\end{enumerate}
Then~$\Gamma$ has a Hamilton circle.
\end{lemma}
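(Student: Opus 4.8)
The plan is to reduce the statement to the construction of two disjoint spanning double rays and then build those by an inductive, level-by-level weaving argument. First, some easy preliminaries. Condition~(ii) forces all~$X_i$ to have a common size~$n$, and~$n\ge 2$ because~$K_1$ is neither~$K_2$ nor contains a cycle. By~(i) and~(ii) the graph~$\Gamma$ is connected, and iterating~(iii) one checks that it has exactly two ends~$\omega_-,\omega_+$ (already deleting the finite slab~$X_0\cup\cdots\cup X_{k-1}$ separates~$\bigcup_{i<0}X_i$ from~$\bigcup_{i\ge k}X_i$). For a two-ended graph a Hamilton circle is the same thing as a pair of vertex-disjoint double rays~$D_1,D_2$ with~$V(D_1)\cupdot V(D_2)=V(\Gamma)$, each having one tail converging to~$\omega_-$ and the other to~$\omega_+$: the closure of~$D_1\cup D_2$ in~$|\Gamma|$ is then~$D_1\cup D_2\cup\{\omega_-,\omega_+\}$, i.e.\ two arcs with the same two endpoints, which is a circle through all vertices. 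From here on I use only the cycle edges inside the~$X_i$ and the matching edges of~(ii); condition~(iii) enters only through the end count just made.

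Next, the weaving scheme. For every~$i$ I will choose a partition~$X_i=A_i\cupdot B_i$ into two nonempty sets, each inducing a path of~$\Gamma[X_i]$ that is a subpath of~$C_i$ (when~$\Gamma[X_i]\cong K_2$ both parts are single vertices), together with a first and a last vertex of each of these two paths, say~$g_i,h_i$ on~$A_i$ and~$f_i,e_i$ on~$B_i$. Writing~$M_i\colon X_i\to X_{i+1}$ for the bijection given by the perfect matching of~(ii), I want
\[ M_i(h_i)=g_{i+1}\quad\text{and}\quad M_i(e_i)=f_{i+1}\qquad(i\in\bbZ). \]
Granting this, let~$D_1$ be the double ray that on each level~$i$ runs along the~$A_i$-path from~$g_i$ to~$h_i$ and then crosses the matching edge~$h_ig_{i+1}$ to level~$i+1$, and let~$D_2$ do the analogous thing with the~$B_i$-path from~$f_i$ to~$e_i$ and the matching edge~$e_if_{i+1}$. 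Since~$h_i\in A_i$ and~$e_i\in B_i$ are distinct, $D_1$ and~$D_2$ use different matching edges between any two consecutive levels, so together with~$A_i\cap B_i=\emptyset$ they are vertex-disjoint; they obviously span~$V(\Gamma)$, and along each of them the level index tends to~$+\infty$ on one tail and to~$-\infty$ on the other, so the tails converge to~$\omega_+$ and~$\omega_-$ as required.

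The only step needing real checking is producing the partitions, which I do by induction on~$|i|$: fix level~$0$ arbitrarily (say~$A_0$ a single vertex), and propagate up and down. Going up, the two equations above \emph{determine}~$g_{i+1}:=M_i(h_i)$ and~$f_{i+1}:=M_i(e_i)$, which are distinct since~$M_i$ is injective; one then has to split~$X_{i+1}$ into two nonempty subpaths of~$C_{i+1}$, one with~$g_{i+1}$ an endpoint and the other with~$f_{i+1}$ an endpoint, and take~$h_{i+1},e_{i+1}$ to be the respective other endpoints. Going down is symmetric, with~$h_{i-1}:=M_{i-1}^{-1}(g_i)$ and~$e_{i-1}:=M_{i-1}^{-1}(f_i)$. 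Thus everything reduces to the elementary fact that for a cycle~$C=v_0v_1\cdots v_{n-1}v_0$ with~$n\ge3$ and any~$j$ with~$1\le j\le n-1$, deleting the two distinct edges~$v_0v_1$ and~$v_jv_{j+1}$ (indices mod~$n$) leaves two nonempty paths, one with~$v_0$ and one with~$v_j$ as an endpoint; applied with~$\{v_0,v_j\}=\{g_{i+1},f_{i+1}\}$ (resp.\ $\{h_{i-1},e_{i-1}\}$) it keeps the induction going, and when~$\Gamma[X_i]\cong K_2$ nothing has to be checked at all — the two singletons always work and the construction is merely a consistent relabelling~$X_i=\{a_i,b_i\}$ along the matchings, with~$D_1,D_2$ the two resulting ``horizontal'' double rays. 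The one genuinely topological point, that two disjoint spanning double rays whose tails run to the two distinct ends really do close up to a circle in the Freudenthal compactification, is a standard property of~$|\Gamma|$ for a locally finite two-ended~$\Gamma$, and is where care, rather than a new idea, is needed.
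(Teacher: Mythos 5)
Your proof is correct and follows essentially the same route as the paper's: both arguments build two disjoint spanning double rays by splitting each cycle~$C_i$ into two paths at the vertices where the rays enter~$X_i$ and crossing between consecutive levels along the matching edges of~(ii), then let the pair close up through the two ends to form the circle. The only cosmetic differences are that you make the splitting explicit via an edge-deletion observation on cycles instead of a fixed orientation, and you absorb the~$K_2$ case into the general scheme, whereas the paper dispatches it separately via the spanning double ladder.
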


\begin{proof}
By (i) we know that each~$X_i$ is connected and so we conclude from the structure given by (ii) and (iii) that~$\Gamma$ has exactly two ends. 
In addition note that~$|X_i|=|X_j|$ for all~${i,j \in \bbZ}$. 
First we assume that~${\Gamma[X_i}]$ is just a~$K_2$. 
It follows directly that~$\Gamma$ is spanned by the double ladder, which is well-known to contain a Hamilton circle.
As this double ladder shares its ends with~$\Gamma$, this Hamilton circle is also a Hamilton circle of~$\Gamma$. 

Now we assume that~$|X_i| \geq 3$. 
Fix an orientation of each~$C_i$. 
The goal is to find two disjoint spanning doubles rays in~$\Gamma$. 
We first define two disjoint rays belonging to same end, say for all the~$X_i$ with~${i \geq 1}$.   
Pick two vertices~$u_1$ and~$w_1$ in~$X_1$. 
For~$R_1$ we start with~$u_1$ and move along~$C_1$ in the fixed orientation of~$C_1$ till the next vertex on~$C_1$ would be~$w_1$, we then instead of moving along we move by  the given matching edge to~$X_2$.
We take this  to be a the initial part of~$R_1$.
We do the analog for~$R_2$ by starting with~$w_1$ and moving also along~$C_1$ in the fixed orientation till the next vertex would be~$u_1$, then move to~$X_2$.   
We repeat the process of starting with some~$X_i$ in two vertices~$u_i$ and~$w_i$, where~$u_i$ is the first vertex of~$R_1$ on~$X_i$ and~$w_i$ the analog for~$R_2$.  
We follow along the fixed orientation on~$C_i$ till the next vertex would be~$u_i$ or~$w_i$, respectively.
Then we move to~$X_{i+1}$ by the giving matching edges. 
One can easily see that each vertex of~$X_i$ for~$i \geq 1$ is contained exactly either in~$R_1$ or~$R_2$.
By moving from~$u_1$ and~$w_1$ to~$X_0$ by the matching edges and then using the same process but moving from~$X_i$ to~$X_{i-1}$ extents the rays~$R_1$ and~$R_2$ into two double rays. 
Obviously those double rays are spanning and disjoint. 
As~$\Gamma$ has exactly two ends it remains to show that~$R_1$ and~$R_2$ have a tail in each end. 
By (ii) there is a~$k$ such that there is no edge between any~$X_i$ and~$X_j$ with~$|i-j|\geq k$.
The union~$\bigcup_{i=\ell}^{\ell+k} X_i,~\ell \in \bbZ$, separates~$\Gamma$ into two components such that~$R_i$ has a tail in each component, which is sufficient.  
\end{proof}

\noindent Next we prove a slightly different version of Lemma \ref{Cylinder}. 
In  this version we split each~$X_i$ into any upper and lower part,~$X_i^+$ and~$X_i^-$, and assume that we only find  a perfect matching between upper and lower parts of adjacent partition classes, see Lemma \ref{altern_cylinder} for details. 

\begin{lemma}\label{altern_cylinder}
Let~$\Gamma$ be a graph that admits a partition of its vertex set into finite sets~$X_i, i \in \bbZ$ with~${|X_i|\geq 4}$ fulfilling the following conditions:

\begin{enumerate}[\rm (i)] 
\item~$X_i = X_i^+ \cup X_i^-$, such that~$X_i^+ \cap X_i^- = \emptyset$ and~$|X_i^+| = |X_i^-|$
\item~$\Gamma[X_i]$ contains an Hamilton cycle~$C_i$ which is alternating between~$X_i^-$ and~$X_i^+$.\footnote{Exactly every other element of~$C_i$ is contained in~$X_i^-$.}
\item For each~${i \in \bbZ}$ there is a perfect matching between~$X^+_i$ and~$X^-_{i+1}$. 
\item There is a~$k \in \bbN$ such that for all~$i,j \in \bbZ$ with~${|i -j| \geq k}$ there is no edge in~$\Gamma$ between~$X_i$ and~$X_{j}$.
\end{enumerate}
Then~$\Gamma$ has a Hamilton circle.
\end{lemma}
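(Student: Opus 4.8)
The plan is to mimic the proof of Lemma~\ref{Cylinder}, but to be careful about which ``side'' of each partition class a ray currently lives on, since the matchings now only connect $X_i^+$ to $X_{i+1}^-$. As before, conditions (ii) and (iv) guarantee that $\Gamma$ has exactly two ends, and (i) forces $|X_i| = |X_j|$ for all $i,j$; so it again suffices to exhibit two disjoint spanning double rays, each with a tail in each end, and then the standard fact that such a pair closes up to a Hamilton circle of $|\Gamma|$ (sharing ends with $\Gamma$) finishes the argument. The separator $\bigcup_{i=\ell}^{\ell+k} X_i$ will again witness that the two rays have tails in both ends, exactly as in Lemma~\ref{Cylinder}.

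The construction of the two rays $R_1, R_2$ for the indices $i \geq 1$ runs as follows. Fix an orientation of each cycle $C_i$; since $C_i$ alternates between $X_i^-$ and $X_i^+$, each matching edge from $X_i^+$ to $X_{i+1}^-$ arrives at a vertex of $X_{i+1}^-$, and from there we must traverse at least one edge of $C_{i+1}$ before we can leave $X_{i+1}$ again via a matching edge (which must start in $X_{i+1}^+$). The key point is that when a ray enters $X_{i+1}$ at some vertex $v \in X_{i+1}^-$, we let it walk along $C_{i+1}$ in the fixed orientation, and we choose the ``exit rule'' so that the ray leaves $X_{i+1}$ from a vertex of $X_{i+1}^+$ while covering a full arc of $C_{i+1}$. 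Concretely, in $X_1$ pick the two entry vertices $u_1, w_1 \in X_1^-$ (we may assume the rays enter $X_1$ from $X_0$ at vertices of $X_1^-$); for $R_1$ start at $u_1$ and follow $C_1$ until the next vertex would be $w_1$, then cross to $X_2$ via the matching edge — this is legitimate precisely when the vertex we stop at lies in $X_1^+$, which we arrange by choosing $u_1, w_1$ so that the arc of $C_1$ from $u_1$ up to (not including) $w_1$ ends at a $X_1^+$-vertex; symmetrically for $R_2$ starting at $w_1$. Because $|X_i^+| = |X_i^-|$ and $C_i$ is alternating, the two arcs into which $u_i, w_i$ split $C_i$ each have even length, so each arc genuinely ends on the opposite side from where it started — this is the combinatorial fact that makes the side-bookkeeping consistent at every step. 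Iterating over $i \geq 1$, then repeating symmetrically for $i \leq 0$ by crossing from $X_1$ to $X_0$ first, extends $R_1, R_2$ to disjoint spanning double rays covering all of $\bigcup_{i \in \bbZ} X_i = V(\Gamma)$.

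The main obstacle, and the only place this differs in substance from Lemma~\ref{Cylinder}, is the parity/side consistency: one has to check that the vertex at which each ray is forced to exit a class $X_i$ really does lie in $X_i^+$ (so that the matching edge to $X_{i+1}^-$ exists), and that the two entry vertices $u_{i+1}, w_{i+1}$ one lands on in $X_{i+1}^-$ are distinct and split $C_{i+1}$ into two arcs of even length. The hypothesis $|X_i| \geq 4$ (hence $|X_i^\pm| \geq 2$) is what guarantees $u_i \neq w_i$ and that both arcs are nonempty; the alternating property of $C_i$ together with $|X_i^+| = |X_i^-|$ gives the even length and hence the correct terminal side. Once this bookkeeping is verified, disjointness and spanning are immediate by construction, and the end-tail condition is handled verbatim as in Lemma~\ref{Cylinder}, completing the proof.
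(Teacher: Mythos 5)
Your proposal is correct and follows essentially the same route as the paper: two disjoint spanning double rays built by walking each $C_i$ from an entry vertex in $X_i^-$ until just before the other ray's entry vertex, exiting from $X_i^+$ via the matching, with the alternation of $C_i$ supplying exactly the side-consistency you highlight and the separator $\bigcup_{i=\ell}^{\ell+k}X_i$ giving the tails in both ends. The only difference is cosmetic: you justify the exit side via the even-length arcs between the two $X_i^-$ entry vertices, while the paper simply observes that the predecessor of a vertex of $X_i^-$ on the alternating cycle lies in $X_i^+$.
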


The proof of Lemma \ref{altern_cylinder} is very closely related to the proof of Lemma \ref{Cylinder}. 
We still give the complete proof for completeness.

\begin{proof}
By (i) we know that each~$X_i$ is connected and so we conclude from the structure given by (ii) and (iii) that~$\Gamma$ has exactly two ends. 
In addition note that~$|X_i|=|X_j|$ for all~${i,j \in \bbZ}$. 

Fix an orientation of each~$C_i$. 
The goal is to find two disjoint spanning doubles rays in~$\Gamma$. 
We first define two disjoint rays belonging to the same end, say for all the~$X_i$ with~${i \geq 0}$.   
Pick two vertices~$u_1$ and~$w_1$ in~$X_1^-$. 
For~$R_1$ we start with~$u_1$ and move along~$C_1$ in the fixed orientation of~$C_1$ till the next vertex on~$C_1$, then instead of moving along~$C_1$ we move to~$X_2^-$ by  the given matching edge. 
Note that as~$w_1$ is in~$X_1^-$ and because each~$C_i$ is alternating between~$X_i^-$ and~$X_i^+$  this is possible.  
We take this  to be a the initial part of~$R_1$.
We do the analog for~$R_2$ by starting with~$w_1$ and moving also along~$C_1$ in the fixed orientation till the next vertex would be~$u_1$, then move to~$X_2$.   
We repeat the process of starting with some~$X_i$ in two vertices~$u_i$ and~$w_i$, where~$u_i$ is the first vertex of~$R_1$ on~$X_i$ and~$w_i$ the analog for~$R_2$.  
We follow along the fixed orientation on~$C_i$ till the next vertex would be~$u_i$ or~$w_i$, respectively.
Then we move to~$X_{i+1}$ by the giving matching edges. 
One can easily see that each vertex of~$X_i$ for~$i \geq 1$ is contained exactly either in~$R_1$ or~$R_2$.
By moving from~$u_1$ and~$w_1$ to~$X_0^+$ by the matching edges and then using the same process but moving from~$X_i^-$ to~$X_{i-1}^+$ extents the rays~$R_1$ and~$R_2$ into two double rays. 
Obviously those double rays are spanning and disjoint. 
As~$\Gamma$ has exactly two ends it remains to show that~$R_1$ and~$R_2$ have a tail in each end. 
By (ii) there is a~$k$ such that there is no edge between any~$X_i$ and~$X_j$ with~$|i-j|\geq k$ the union~$\bigcup_{i=l}^{\ell+k} X_i, ~l \in \bbZ$ separates a~$\Gamma$ into two components such that~$R_i$ has a tail in each component, which is sufficient.  
\end{proof} 
 
\begin{remark}
\label{arcaltern_cylinder}
It is easy to see that one can find a Hamilton double ray instead of a Hamilton circle in Lemma $\ref{Cylinder}$ and Lemma $\ref{altern_cylinder}$.
Instead of starting with two vertices and following in the given orientation to define the two double rays, one just starts in a single vertex and follows the same orientation. 
\end{remark}

The following lemma is  one of our main tools in proving the existence of Hamilton circles in Cayley graphs. 
It is important to note that the restriction, that~${S \cap H = \emptyset}$, which looks very harsh at first glance, will not be as restrictive in the later parts of this paper.
In most cases we can turn the case~${S \cap H \neq \emptyset}$ into the case~${S \cap H = \emptyset}$ by taking an appropriate quotient.

\begin{lemma}
\label{ZigZag}
Let~$G=\langle S\rangle$ and~$\widetilde G=\langle \widetilde S\rangle$ be  finite groups with non-trivial subgroups~$H\cong \widetilde H$ of indices two such that~$S\cap H=\emptyset$ and such that~${\Gamma(G,S)}$ contains a Hamilton cycle. 
Then the following statements are true. 
\begin{enumerate}[\rm (i)]
\item $\Gamma({G \free_H \wtG},S\cup \wtS)$ has a Hamilton circle.
\item $\Gamma({G \free_H \wtG},S\cup \wtS)$ has a Hamilton double ray.
\end{enumerate}
\end{lemma}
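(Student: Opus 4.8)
The plan is to realise $\Gamma\defi\Gamma({G\free_H\wtG},S\cup\wtS)$ as a graph satisfying the hypotheses of Lemma~\ref{altern_cylinder}: part~(i) is then immediate, and part~(ii) follows from Remark~\ref{arcaltern_cylinder}.

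First I would describe the coset geometry of the amalgam. Since $[G:H]=[\wtG:\widetilde H]=2$, both $H$ and $\widetilde H$ are normal in their respective factors, and the Bass--Serre tree of $G\free_H\wtG$---whose vertices are the left cosets of $G$ and of $\wtG$ and whose edges are the left cosets of $H$---is $2$-regular, hence a bi-infinite line. Reading off Britton normal forms with respect to transversals $\{1,g\}$ of $H$ in $G$ and $\{1,\widetilde g\}$ of $\widetilde H$ in $\wtG$, one checks that the left cosets of $H$ in $G\free_H\wtG$ can be listed as a two-way infinite sequence $(X_i)_{i\in\bbZ}$ such that, for every $j\in\bbZ$, the set $X_{2j-1}\cupdot X_{2j}$ is a left coset of $G$ while $X_{2j}\cupdot X_{2j+1}$ is a left coset of $\wtG$; in other words, along the line the $G$-cosets and the $\wtG$-cosets strictly alternate, each consecutive pair sharing exactly one $H$-coset. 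I would then take the $G$-cosets as the partition for Lemma~\ref{altern_cylinder}, writing $\mathcal X_j\defi X_{2j-1}\cup X_{2j}$ with $\mathcal X_j^-\defi X_{2j-1}$ and $\mathcal X_j^+\defi X_{2j}$. As $H$ is non-trivial we have $|\mathcal X_j|=|G|=2|H|\ge 4$ and $|\mathcal X_j^-|=|\mathcal X_j^+|=|H|$, so condition~(i) of Lemma~\ref{altern_cylinder} holds.

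Next I would check conditions~(ii)--(iv). For~(ii), left translation by a coset representative identifies $\Gamma[\mathcal X_j]$ with the subgraph of $\Gamma$ induced on $G$; the edges of that subgraph are precisely those of $\Gamma(G,S)$ possibly together with some extra edges given by generators in $\wtS\cap\widetilde H$ (which, lying in $H$, run inside a single $H$-coset), so $\Gamma[\mathcal X_j]\supseteq\Gamma(G,S)$, and $\Gamma(G,S)$ has a Hamilton cycle by hypothesis. Moreover, because $S\cap H=\emptyset$ the graph $\Gamma(G,S)$ is bipartite with the two cosets of $H$ as its colour classes, so every one of its cycles---in particular this Hamilton cycle---alternates between $\mathcal X_j^-$ and $\mathcal X_j^+$; this gives~(ii). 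For~(iii), the edges of $\Gamma$ joining $\mathcal X_j^+=X_{2j}$ to $\mathcal X_{j+1}^-=X_{2j+1}$ are exactly those contained in the $\wtG$-coset $X_{2j}\cupdot X_{2j+1}$ and labelled by a generator in $\wtS\setminus\widetilde H$; this set of generators is non-empty since $\wtS$ generates $\wtG\ne\widetilde H$, so these edges form a regular bipartite graph of positive degree between two sets of the same size, which therefore has a perfect matching; fix one. For~(iv), each generator lies in $G$ or in $\wtG$, so each edge of $\Gamma$ is contained either in a single class $\mathcal X_j$ or in a union $\mathcal X_j\cup\mathcal X_{j+1}$ of two consecutive classes, whence there is no edge between $\mathcal X_i$ and $\mathcal X_{i'}$ as soon as $|i-i'|\ge 2$, and $k=2$ works. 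Lemma~\ref{altern_cylinder} now yields a Hamilton circle of $\Gamma$, proving~(i), and Remark~\ref{arcaltern_cylinder} yields a Hamilton double ray, proving~(ii).

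The step that I expect to need the most care is the coset bookkeeping of the second paragraph: making sure that the left cosets of $H$ genuinely line up along a line with the $G$-cosets and $\wtG$-cosets strictly alternating and overlapping in single $H$-cosets, and that no edge of the Cayley graph jumps over an $H$-coset. This is precisely where the two index-two hypotheses and Britton's Lemma---equivalently, the fact that the Bass--Serre tree of an amalgam of two groups over a common index-two subgroup is a line---do the real work; after that, the verification of the hypotheses of Lemma~\ref{altern_cylinder} is a routine translation.
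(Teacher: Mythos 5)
Your proposal is correct and takes essentially the same route as the paper: both partition the vertex set into the left $G$-cosets (each a pair of adjacent $H$-cosets along the bi-infinite line of $H$-cosets), observe that the translated Hamilton cycle of $\Gamma(G,S)$ alternates between the two $H$-cosets because $S\cap H=\emptyset$, match consecutive classes via generators in $\wtS\setminus\widetilde H$, and finish with Lemma~\ref{altern_cylinder} and Remark~\ref{arcaltern_cylinder}. The only differences are presentational: you organise the cosets via the Bass--Serre line and invoke a regular-bipartite-graph argument for the matching, where the paper writes down explicit coset representatives $(s\wts)^i$ and the explicit matching $g\mapsto g\wts$.
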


To prove Lemma \ref{ZigZag} we start by finding some general structure given by our assumptions.
This structure will make it possible to use Lemma \ref{altern_cylinder} and Remark~\ref{arcaltern_cylinder} to prove the statements (i) and (ii). 

\begin{proof}
First we define~$\Gamma \defi \Gamma({G \free_H \wtG},S\cup \wtS)$.
Let~${s \in S \setminus H}$ and let~$\wts$ be in~${\wtS\setminus \widetilde H}$.
By our assumptions~${\Gamma(G,S)}$ contains a Hamilton cycle, say~$C_0=1[c_1,\ldots, c_k]$. 
It follows from~${S \cap H = \emptyset}$ that~$C_0$ is alternating between~$H$ and the right coset~$Hs$. 
For each~${i \in \bbZ}$  we now define the graph~$\Gamma_i$. 
\begin{align*}
 \textnormal{For } i \geq 0 \textnormal{ we define } \Gamma_i &\defi \Gamma[{H (s\wts)^i \cup H (s\wts)^i s}] \\
\textnormal{ and for } i \leq -1 \textnormal{ we define } \Gamma_i &\defi  \Gamma[{H \wts (s \wts)^{-i-1} \cup H (\wts s)^{-i}}].
\end{align*} 
By our assumptions we know that~$C_0$ is a Hamilton cycle of~$\Gamma_0$. 
We now define Hamilton cycles of~$\Gamma_i$ for all~$i \neq 0$. 

\begin{align*}
\textnormal{For } i \geq 1 \textnormal{we define } C_i &\defi (s\wts)^i [c_1,\ldots, c_k] \\
\textnormal{ and for } i \leq -1 \textnormal{ we define } C_i &\defi (\wts s)^{-i} [c_1,\ldots, c_k].
\end{align*}
To show that~$C_i$ is a Hamilton cycle of~$\Gamma_i$ it is enough to show that~$C_i$ is a cycle and that~$C_i$ contains no vertex outside of~$\Gamma_i$, because all cosets of~$H$ have the same size and because~$C_0$ is a Hamilton cycle of~${\Gamma_0=\Gamma(G,S)}$. 

For~$i \geq 1$ we first show that~$C_i$ is a cycle. 
It follows directly from the fact that~$C_0$ is a cycle that in~$\Gamma$ each~$C_i$ is closed.\footnote{$\Gamma$ contains the edge between the image of~$c_1$ and~$c_k$ for each~$C_i$.} 
Assume for a contraction that~${(s \wts)^i c_0 \cdots c_j = (s \wts)^i c_0 \cdots c_\ell}$ for some~${j < \ell}$. 
This contracts that~$C_0$ is a cycle as it is equivalent to~${1 = c_{j+1} \cdots c_\ell}$.

It remains to show that every vertex of~$C_i$ is contained in~$\Gamma_i$.
Since~$H$ is a normal subgroup of both~$G$ and~$\widetilde{G}$, the elements~$s$ and~$\wts$ commute with~$H$. 
As each vertex~$v \defi c_0\ldots c_j$ is contained in either~$H$ or~$Hs$ we can conclude that~${(s \wts)^i v \in (s \wts)^i H= H (s \wts)^i}$ or~${ (s \wts)^i v\in (s \wts)^i Hs = H (s \wts)^i s}$.

In the following we give some easy observations on the structure of the~$C_i$'s. 
First note that~${C_i \cap C_j = \emptyset}$ for~$i \neq j$ and also that the union of all~$C_i$'s contains all the vertices of~$\Gamma$.
In addition note that each~$C_i$ is alternating between two copies of~$H$ as~$C_0$ was alternating between cosets of~$\Gamma_0$. 
Finally note that by the structure of~$\Gamma$ there is no edge between any~$\Gamma_i$ and~$\Gamma_j$ with~$|i-j| \geq 2$ in~$\Gamma$.

By the structure of~$\Gamma$ for~$i \geq 0$ we get a perfect matching between~$C_i \cap H (s \wts)^i s$ and~$C_{i+1} \cap H (s \wts)^{i+1}$ by~$\wts$.

By an analog argument one can show that for~${i < 0}$ we get a similar structure and the desired perfect matchings. 

The statement (i) now follows by Lemma \ref{altern_cylinder}.
Analog statement (ii) follows by Remark \ref{arcaltern_cylinder}.
\end{proof}

We now recall two known statements about Hamilton cycles on finite groups,  which we then will first combine and finally generalize to infinite groups. 
For that let us first recall some definitions. 
A group~$G$ is called \emph{Dedekind}, if every subgroup of~$G$ is normal in~$G$. 
If a Dedekind groups~$G$ is also non-abelian, it is called a \emph{Hamilton group}.

\begin{lemma}\label{hamilton}{\rm \cite{ChenQuimpo}}
Any Cayley graph of  a Hamilton group $G$ has a Hamilton cycle.
\end{lemma}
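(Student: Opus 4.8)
The plan is to reduce the statement to the abelian case handled by Lemma~\ref{HCabelian} by a covering (``factor group'') argument. By Baer's classification of Dedekind groups, every Hamilton group decomposes as $G = Q_8 \times \bbZ_2^a \times B$ with $B$ a finite abelian group of odd order; in particular $N \defi [G,G] = [Q_8,Q_8] = \{1,-1\}$ is a central subgroup of order $2$, and the quotient $\bar G \defi G/N$ is abelian of order $|G|/2 \geq 4$. Fix a symmetric generating set $S$ of $G$ and let $\bar S^{\circ}$ be the image of $S$ in $\bar G$ with the identity and all repetitions deleted; this is a symmetric generating set of $\bar G$, so $\Gamma(\bar G,\bar S^{\circ})$ is a connected Cayley graph of a finite abelian group with at least three vertices, and by Lemma~\ref{HCabelian} it has a Hamilton cycle $\bar C$.

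Next I would try to lift $\bar C$ to $\Gamma(G,S)$. Choose a preimage $\widehat{s}\in S$ for each $\bar s\in\bar S^{\circ}$, and record $\bar C$, starting at the identity, as the sequence of edge labels $\bar t_1,\dots,\bar t_m$ with $m=|\bar G|$. The lifted walk $1,\ \widehat{t_1},\ \widehat{t_1}\widehat{t_2},\ \dots$ in $\Gamma(G,S)$ runs through $m$ distinct vertices, one in each $N$-coset met by $\bar C$, and it closes up exactly when the \emph{voltage} $v \defi \widehat{t_1}\cdots\widehat{t_m}\in N$ equals $1$. If instead $v=-1$, then traversing the lifted walk a second time (translated by $v$) produces a closed walk through all $|G|$ vertices, each exactly once, all of whose edges carry labels in $S\setminus N\subseteq S$; this is a Hamilton cycle of $\Gamma(G,S)$ and the lemma follows.

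The genuinely hard case is $v=1$, where the naive lift covers only one of the two sheets. Here I would exploit the extra room in the abelian graph: connected Cayley graphs of finite abelian groups are in fact Hamilton-connected (Hamilton-laceable in the bipartite case), so one can choose the Hamilton path/cycle of $\bar G$ to be ``closed up'' through a different generator, and then track how each such local change alters $v$; since $|N|=2$, a single admissible voltage-changing modification suffices. The point requiring care --- and the technical heart of the cited result --- is showing that for every generating set such a modification exists, equivalently that not every Hamilton cycle of $\Gamma(\bar G,\bar S^{\circ})$ can have trivial voltage; the cleanest way to dispose of the unavoidable small exceptions is a direct check on $Q_8$ and $Q_8\times\bbZ_2$. (Alternatively, the whole lemma is a special case of the theorem that every connected Cayley graph of a finite group whose commutator subgroup has prime order is Hamiltonian, applied with $|[G,G]|=2$.)
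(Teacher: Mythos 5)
The paper contains no proof of this lemma to compare yours against: it is quoted verbatim from Chen and Quimpo, and the authors use it as a black box (only to derive Remark~\ref{dedekind}). Judged on its own terms, your reduction is the right general strategy --- Baer's classification, $N=[G,G]=\{1,-1\}$ central of order two, pass to the abelian quotient $\bar G=G/N$, apply Lemma~\ref{HCabelian} there, and lift --- and your treatment of the voltage $v=-1$ case is correct: since $N$ is central, the second traversal of the lifted walk sweeps out the second element of each $N$-coset and closes up at $v^2=1$.

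The genuine gap is exactly where you flag it, the case $v=1$, and it is not a removable formality. You assert that a ``single admissible voltage-changing modification'' of the Hamilton cycle of $\bar G$ always exists, but you neither exhibit one nor prove its existence. The natural candidate --- picking $s,t\in S$ with $[s,t]=-1$ (such a pair exists, since $G$ is non-abelian and all commutators lie in $N$) and replacing a segment $g,\,g\bar s,\,g\bar s\bar t$ of $\bar C$ by $g,\,g\bar t,\,g\bar t\bar s$ --- changes the intermediate vertex from $g\bar s$ to $g\bar t$, so it does not in general produce another Hamilton cycle of $\bar G$; arranging a Hamilton cycle of $\bar G$ that both uses a prescribed non-commuting pair consecutively and lifts with nontrivial voltage is precisely the technical content of the Chen--Quimpo paper (via their Hamilton-connectedness theorem for abelian Cayley graphs) and is absent from your sketch. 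Your fallback observation --- that the lemma follows from the theorem that connected Cayley graphs of finite groups with commutator subgroup of prime order are Hamiltonian --- is true, but that is a strictly harder theorem (Durnberger, Keating--Witte), so invoking it replaces one citation by a deeper one rather than supplying a proof. As written, your argument establishes the lemma only for those generating sets whose lifted voltage happens to come out as $-1$.
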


In addition we know that all finite abelian groups also contain Hamilton cycles by Lemma \ref{HCabelian}. 
In the following remark we combine these two facts. 

\begin{remark}\label{dedekind}
Any Cayley graph of a finite Dedekind group of order at least three contains a Hamilton cycle.
\end{remark}

\subsection{Main Results}
\label{results}
In this section we prove our main results. 
For that let us recall that by Theorem~\ref{stallings} we know that there every two ended group either a free product with amalgamation over a finite subgroup of index two or an HNN-extension over a finite subgroup. 
Now we prove our first main result, Thereom~\ref{infinite semihamilton}, which deals with the first type of groups. 
To be more precise we use Remark~\ref{dedekind} to prove that the free product of a Dedekind group with a second group with amalgamation over the subgroup of index two in both of those groups contains a Hamilton circle. 

\begin{thm}
\label{infinite semihamilton}
Let~$G=\langle S\rangle$ and~$\widetilde G=\langle \widetilde S\rangle$ be  two finite groups with  non-trivial subgroups~$H\cong \widetilde {H}$ of  indices two and such that~$G$ is a Dedekind group.   
Then~$\Gamma({G \free_{H} \wtG},S\cup \wtS)$ has a Hamilton circle.
\end{thm}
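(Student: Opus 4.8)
The plan is to reduce Theorem~\ref{infinite semihamilton} to Lemma~\ref{ZigZag} by handling the troublesome case $S \cap H \neq \emptyset$ via a quotient, as hinted in the remark preceding Lemma~\ref{ZigZag}. First I would observe that since $G$ is a Dedekind group and $H \leq G$ has index two, $H$ is normal in $G$; likewise $\widetilde H$ is normal in $\widetilde G$ (index-two subgroups are always normal). If $S \cap H = \emptyset$ and $\widetilde S \cap \widetilde H = \emptyset$, then by Remark~\ref{dedekind} the Cayley graph $\Gamma(G,S)$ has a Hamilton cycle (here we need $|G| \geq 3$, which follows because $H$ is non-trivial and $[G:H]=2$, so $|G| \geq 4$), and Lemma~\ref{ZigZag}(i) applies directly to give a Hamilton circle of $\Gamma(G \free_H \widetilde G, S \cup \widetilde S)$.

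The main work is therefore the case where some generator lies in $H$ (or in $\widetilde H$). Here I would pass to the quotient by the normal subgroup $N \defi \bigcap$ of the relevant core — more precisely, I would consider the subgroup $H_0 \leq H$ generated by $S \cap H$ together with $\widetilde S \cap \widetilde H$ under the amalgamating identification, note that $H_0$ is normal in $G \free_H \widetilde G$ because $H$ is normal in both factors (so conjugation by any generator preserves $H$, hence one checks it preserves $H_0$), and form $\Gamma/H_0$. Alternatively, and more cleanly, I would split off the part of the generating set inside $H$: write $S = S' \cupdot (S \cap H)$ and build the Hamilton circle by first using the $S \cap H$ edges to span each coset-block of $H_0$-cosets and then applying the Lemma~\ref{ZigZag} construction on the quotient structure. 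The point is that $\langle S \cap H\rangle$ is a finite Dedekind group (a subgroup of a Dedekind group is Dedekind), so by Remark~\ref{dedekind} again we can Hamilton-cycle the relevant blocks, and the block structure of $G \free_H \widetilde G$ over this subgroup still fits the hypotheses of Lemma~\ref{Cylinder} or Lemma~\ref{altern_cylinder}.

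Concretely, the cleanest route I would pursue: let $K = \langle S \cap H \rangle \leq H$; if $K$ is non-trivial it is a finite Dedekind group of order $\geq 3$ unless $|K|=2$, a case one checks separately (then $K \cong \bbZ_2$ and $\Gamma[\text{each }K\text{-coset}]$ is a $K_2$, still covered by Lemma~\ref{Cylinder}(i)). The vertex set of $\Gamma(G \free_H \widetilde G, S \cup \widetilde S)$ partitions into $K$-cosets; these are indexed by $(G \free_H \widetilde G)/K$, which one analyses using Britton's Lemma to see that it carries a two-ended "double-ladder-like" structure with the $K$-cosets as the finite blocks $X_i$, matchings provided by the generators $s \in S \setminus H$ and $\widetilde s \in \widetilde S \setminus \widetilde H$ (which exist since $H \neq G$ and $\widetilde H \neq \widetilde G$), and each $\Gamma[X_i]$ Hamiltonian by Remark~\ref{dedekind}. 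Then Lemma~\ref{Cylinder} finishes it.

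The hard part will be verifying the matching and bounded-interaction conditions (conditions (ii)–(iii) of Lemma~\ref{Cylinder}, or (iii)–(iv) of Lemma~\ref{altern_cylinder}) for the $K$-coset partition: one must show, via Britton normal forms, that consecutive blocks in the natural $\bbZ$-indexing are joined by a perfect matching and that non-consecutive blocks have no edges between them. A secondary subtlety is making sure the Hamilton cycle of each block $\Gamma[X_i]$ can be chosen \emph{alternating} in the sense Lemma~\ref{altern_cylinder} requires, if that is the lemma one ends up invoking; choosing $K$-cosets as blocks and matching via a single generator of order $2$ in $H$-cosets should make the non-alternating Lemma~\ref{Cylinder} suffice, avoiding this issue. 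I would also take care that the degenerate sub-cases ($|K| \leq 2$, or $S \cap H = \emptyset$ but $\widetilde S \cap \widetilde H \neq \emptyset$, which by symmetry of the construction is not symmetric in the hypotheses since only $G$ is assumed Dedekind) are each explicitly dispatched.
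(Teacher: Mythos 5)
Your overall strategy --- dispatch the case $S\cap H=\emptyset$ via Lemma~\ref{ZigZag}, and otherwise take the subgroup $L=\langle S\cap H\rangle$, Hamilton-cycle each of its cosets using Remark~\ref{dedekind}, and feed the coset partition into Lemma~\ref{Cylinder} --- is the same as the paper's. But there is a genuine gap at exactly the point you label ``the hard part''. Lemma~\ref{Cylinder} requires the $L$-cosets to come with a $\bbZ$-indexing $(X_i)_{i\in\bbZ}$ such that $X_i$ and $X_{i+1}$ are joined by a perfect matching for \emph{every} $i$. When $L\subsetneq H$ there is no ``natural $\bbZ$-indexing'': each coset of $H$ in $G\free_H\wtG$ splits into $[H:L]$ cosets of $L$, the generators in $S\setminus H$ and $\wtS\setminus\widetilde H$ only join $L$-cosets in adjacent $H$-levels, and Britton normal forms by themselves do not tell you in which order to list the several $L$-cosets sitting at the same level, nor guarantee that consecutive cosets in whatever order you choose are perfectly matched. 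Asserting that the quotient ``carries a two-ended double-ladder-like structure'' is asserting the conclusion, not proving it.

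The paper's resolution is precisely the quotient route you mention and then set aside as less clean: pass to $G/L\free_{H/L}\wtG/\widetilde L$, observe that by maximality of $S\cap H$ every generator of $G/L$ now lies outside $H/L$, and apply part \textup{(ii)} of Lemma~\ref{ZigZag} to obtain a Hamilton \emph{double ray} $\mathcal R$ of this quotient Cayley graph. The vertices of $\mathcal R$, in the order they appear along the ray, supply the $\bbZ$-indexing of the $L$-cosets, and its edges lift (using normality of $L$ in $G\free_H\wtG$) to the required perfect matchings between consecutive cosets; this is the entire reason Lemma~\ref{ZigZag} has a part \textup{(ii)}. Without this step your verification of conditions \textup{(ii)} and \textup{(iii)} of Lemma~\ref{Cylinder} does not go through. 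Two minor points: the case $S\cap H=\emptyset$ but $\wtS\cap\widetilde H\neq\emptyset$ needs no separate treatment, since Lemma~\ref{ZigZag} only assumes $S\cap H=\emptyset$; and $L$ should be generated by $S\cap H$ alone, with $\widetilde L$ its image under the identification $H\cong\widetilde H$, rather than adjoining $\wtS\cap\widetilde H$.
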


\begin{proof}
First, it follows from Remark \ref{dedekind} that~${\Gamma(G,S)}$ has a Hamilton cycle.
If all generators of~$S=\{s_1,\ldots,s_n\}$ lie outside~$H$, then Lemma \ref{ZigZag} completes the proof.
So let~${s_n \in S \setminus H}$ and let~${\wts \in \wtS\setminus \widetilde H}$.
Suppose that~$S^\prime \defi {\{s_1,\ldots, s_i \}}$ is a maximal set of generators of~$S$ contained in~$H$ and set~$L:=\langle S^\prime \rangle$. 
First note that~$L$ is a normal subgroup of~$G$.
We now have two cases, either~${H = L}$ or~${L \neq H}$. 
We may assume that~${H \neq L}$ as otherwise we can find a Hamilton circle of~$\Gamma({G \free_{H} \wtG},S\cup \wtS)$ by Lemma~\ref{Cylinder} as~$H$ is a Dedekind group and thus~$\Gamma(H,S^\prime)$ contains a Hamilton cycle. 
Because~${L \subsetneq H}$ and~$H\cong \widetilde {H}$ we conclude that there is a subgroup of~$\widetilde{H}$ that is corresponding to~$L$, call this~$\widetilde{L}$.
Let~$\Lambda$ be the Cayley graph of the group~${G/L \free_{H/L} \wtG/\widetilde{L}}$ with the generating set~$\overline{S}\cup\overline{\tilde{S}}$, where~$\overline{S}$ and~$\overline{\tilde{S}}$ the corresponding generating sets of~$G/L$ and~$\widetilde{G}/\widetilde{L}$, respectively. 
Note that every generator of the quotient group~$G/L$ lies outside of~$H/L$. 
Hence it follows from Lemma \ref{ZigZag}, that we can find a Hamilton double ray in~$\Lambda$, say~$\mathcal R$. 
Now we are going to use~$\mathcal{R}$ and construct a Hamilton circle for~${\Gamma \defi \Gamma({G \free_{H} \wtG},S\cup \wtS)}$. 
Since~$L$ is a subgroup of~$H$, we can find a Hamilton cycle in the induced subgroup of~$L$, i.e.~$\Gamma(L,S^\prime)$.
We denote this Hamilton cycle in~$\Gamma(L,S^\prime)$ by~$C=[x_1,\ldots,x_n]$. 
We claim that the induced subgraph of any coset of~$L$ of~$G \free_{H} \wtG$ contains a Hamilton cycle. 
Let~$Lx$ be an arbitrary coset of~$G\free_{H} \wtG$.  
If we start with~$x$ and move along the edges given by~$C$, then we obtain a cycle. 
We will show that this cycle lies in~$Lx$. 
Since~$L$ is a normal subgroup of both~$G$ and~$\wtG$ it implies that~$L$ is a normal subgroup of~$G \free_{H} \wtG$. 
Since~$L$ is normal, the element~$x$ commutates with the elements of~$L$ and so~$x[C]$ lies in~$Lx$ and the claim is proved. 
It is important to notice that~$\mathcal {R}$ gives a prefect mating between each two successive cosets. 
Thus we are ready to invoke the Lemma \ref{Cylinder} and this completes the proof.
\end{proof}
 
The following Theorem \ref{semidirect} proves that the second type of two ended groups also contains a Hamilton circle, given some conditions.  
 
\begin{remark}
\label{HNN}
Let us have a closer look at an HNN extension of a finite group~$C$. 
Let~${C=\langle S\mid R\rangle}$ be a finite group.
It is important to notice that every automorphism~${\phi\colon C\to C}$ gives us an HNN extension~${G=C\free_C}$. 
In particular every such HNN extension comes from an automorphism~${\phi\colon C\to C}$. 
Therefore~$C$ is a normal subgroup of $G$ with the quotient $\bbZ$, as the presentation of HNN extension~${ G=C\free_C}$ is
\[{\langle S,t\mid R,\, t^{-1}c t=\phi(c) \,\forall c\in C\rangle }.\] 
Hence $G$ can be expressed by a semidirect product~${ C\rtimes \bbZ }$ which is induced by~$\phi$.
To summarize; every two ended group with a structure of HNN extension is a semidirect product of a finite group with the infinite cyclic group. 
\end{remark} 
 
\begin{thm}\label{semidirect}
Let~$G= (H \rtimes F , X\cup Y)$ with~$F= \mathbb Z=\langle Y\rangle$ and~$H=\langle X \rangle$ and such that~$H$ is finite and~$H$ contains a Hamilton cycle.  
Then~$G$ has a Hamilton circle.
\end{thm}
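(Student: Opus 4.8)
The plan is to realize $G = H \rtimes F$ with $F = \mathbb{Z} = \langle Y \rangle$ as a graph satisfying the hypotheses of Lemma~\ref{Cylinder}, and then simply invoke that lemma. The natural partition is by the cosets of $H$: since $H \rtimes \mathbb{Z}$ has $H$ as a normal subgroup with quotient $\mathbb{Z}$, the cosets are exactly $H t^i$ for $i \in \bbZ$, where $t$ is a generator of $F$ (i.e.\ $Y = \{t^{\pm 1}\}$, or more generally $Y$ generates the $\mathbb{Z}$-factor). So I would set $X_i \defi \Gamma[H t^i]$. Condition~(i) of Lemma~\ref{Cylinder}: $H$ is finite and by hypothesis $\Gamma(H,X)$ contains a Hamilton cycle $C_0 = 1[x_1,\dots,x_k]$; translating by $t^i$ gives a cycle $C_i \defi t^i[x_1,\dots,x_k]$, and because $H$ is normal, conjugation by $t^i$ permutes $H$, so $C_i$ stays inside the coset $H t^i$ and visits every element of it exactly once — this is the same translation-and-normality argument used in the proof of Lemma~\ref{ZigZag}. (If $|H| \le 2$ one lands in the $K_2$ alternative, though one should double-check the degenerate case $|H|=1$, where $G = \mathbb{Z}$ and the only issue is the excluded generating set $\{\pm 1\}$; here since $Y$ together with $X$ generates and $X \subseteq H$, if $H$ is trivial then $Y$ alone generates $\mathbb{Z}$, and as long as $|Y^{\pm}| \ge 2$ beyond $\{\pm 1\}$ — actually one may just assume $H$ nontrivial or handle $\bbZ$ separately as in Theorem~\ref{Z}.)

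For condition~(ii), I need a perfect matching between $X_i$ and $X_{i+1}$: the generator $t \in Y$ (or its relevant power hitting $Ht$) gives, for each $h t^i \in H t^i$, the edge to $h t^i \cdot t = h t^{i+1} \in H t^{i+1}$; since right multiplication by $t$ is a bijection $G \to G$ preserving cosets-shift, this is a perfect matching. For condition~(iii), I must produce a $k \in \bbN$ such that there is no edge between $H t^i$ and $H t^j$ once $|i - j| \ge k$. An edge corresponds to right-multiplication by some generator $s \in X \cup Y$. If $s \in X \subseteq H$ it fixes the coset; if $s \in Y$, write $s$ in terms of $t$ — since $Y$ generates $F = \mathbb{Z}$, each $s \in Y$ shifts the $\mathbb{Z}$-coordinate by a bounded amount $e_s$, and $k \defi \max_{s \in Y} |e_s| + 1$ works. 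More carefully: the projection $\pi\colon G \to \mathbb{Z}$ sends $H t^i \mapsto i$, and an edge from $H t^i$ changes $\pi$ by at most $\max_{s} |\pi(s)|$, giving the bound.

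The main obstacle — and the place where the statement as phrased needs care — is the possibility that $X \cap H \subsetneq X$ fails to be a \emph{generating set of $H$}, or worse, that a generator in $Y$ lies partly ``inside'' $H$, i.e.\ that $X \cup Y$ is not adapted to the semidirect decomposition at all (some $y \in Y$ could be of the form $h t^j$ with $h \ne 1$). In that case the cosets $H t^i$ are still the right partition and condition~(iii) still holds as above, but condition~(i) requires $\Gamma[H t^i]$ to contain a Hamilton cycle, and the edges within $H t^i$ come only from generators $s$ with $\pi(s) = 0$, i.e.\ from $(X \cup Y) \cap H$. The hypothesis ``$H$ contains a Hamilton cycle'' presumably means $\Gamma(H, X)$ does and $X \subseteq H$; I would make that reading explicit. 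Granting it, $C_0$ lives in $\Gamma[H] = \Gamma[H t^0]$, and all three conditions of Lemma~\ref{Cylinder} are verified, so $\Gamma(G, X \cup Y)$ has a Hamilton circle. I would also note in passing (via Remark~\ref{arcaltern_cylinder}, or rather the analogue for Lemma~\ref{Cylinder}) that one simultaneously obtains a Hamilton double ray, which is what Remark~\ref{HNN} sets up for the later arguments.
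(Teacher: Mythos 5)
Your argument works verbatim only in the case that some element of $Y$ projects to $\pm 1$ in $F=\bbZ$, which (given the paper's standing minimality assumption on generating sets) is exactly the case $|Y|=1$. The gap is in your verification of condition~(ii) of Lemma~\ref{Cylinder}. If $|Y|\geq 2$, say $Y=\{2,3\}^{\pm}\subseteq\bbZ$ (a minimal generating set of $\bbZ$), then no generator in $X\cup Y$ carries $Ht^{i}$ to $Ht^{i+1}$: generators in $X$ fix the coset, and generators in $Y$ shift the $\bbZ$-coordinate by $\pm 2$ or $\pm 3$. So there is not merely no perfect matching between consecutive cosets --- there is no edge between them at all, and the parenthetical ``or its relevant power hitting $Ht$'' does not rescue this, since a power $t^{m}$ with $m\geq 2$ of a generator is not an edge of the Cayley graph. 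Your partition therefore fails hypothesis~(ii) of Lemma~\ref{Cylinder} in every case with $|Y|\geq 2$, and the theorem explicitly allows such $Y$.

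This is precisely why the paper splits the proof into two cases. Its Case~I ($|Y|=1$) is your argument: translate the Hamilton cycle of $H$ to each coset using normality and apply Lemma~\ref{Cylinder}. Its Case~II ($|Y|\geq 2$) is genuinely different: it invokes Theorem~\ref{Z} to obtain a Hamilton circle of $\Gamma(\bbZ,Y)$ as two disjoint spanning double rays $\mathcal R_1,\mathcal R_2$, and then ``blows up'' each step of these rays by inserting the Hamilton path $[c_1,\ldots,c_{t-1}]$ through the corresponding coset of $H$, checking disjointness of the two resulting double rays by the finite-order/infinite-order argument. A repair of your approach in the same spirit is conceivable --- re-index the cosets along a single spanning double ray of $\Gamma(\bbZ,Y)$ so that consecutive classes in the new order are joined by a generator of $Y$, then check condition~(iii) for the re-indexing --- but you would have to supply that double ray (again via the machinery of Theorem~\ref{Z}) and verify the bounded-interaction condition for it; as written, your proof does not cover $|Y|\geq 2$. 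Your attention to the degenerate cases ($|H|\leq 2$, $H$ trivial), which the paper passes over, is a useful observation but does not close this gap.
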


\begin{proof}
Let~$C=[c_1,\ldots ,c_t]$ be a Hamilton cycle in~${\Gamma(H, X)}$.
We now make a case study about the size of~$Y$. 
    
\noindent {\bf{Case I :}} If~$|Y|=1$, then~$F= \mathbb Z=\langle y\rangle$. 
Since~$H$ is a normal subgroup of~$G$, it follows that~$g H=H g$ for each~$g \in {G}$. 
Thus the vertices of the set~$Cg$ form a cycle for every~$g \in G$.
Let~$C_g$ be the cycle of~$H g$ for all~${g \in \mathbb Z}$, and let~$\mathcal{C}$ be the set of all those cycles. 
We show that for every pair of~$g,h \in \bbZ$ we either have~$C_h \cap C_g =\emptyset$ or~$C_h =C_g$.
Suppose that~$C_g \cap C_h \neq \emptyset$. 
This means that 
\begin{align*}
c_i  y^g &= c_j y^h \\
\Leftrightarrow  c_j^{-1} c_i &=  y^{h-g}.  
\end{align*}
The order of the left hand side is finite while the order of the right hand side is infinite. 
Thus we conclude that~${y^{h-g}=1}$ which in turn yields that~${g =h}$ thus we get~$C_g=C_h$.  
We claim that every vertex is contained in~$\mathcal{C}$. 
Suppose that~$g\in G$. 
Since~$G= H\rtimes \mathbb Z$, we deduce that~$G=H \mathbb Z$.
In other words, there is a natural number~$i$ and an~$h \in \mathbb Z$ such that~$g=c_i h$ and so~$g$ lies in the cycle~$C_h$.
These conditions now allow the application of Lemma \ref{Cylinder}, which concludes this case. 

\vspace*{0,5cm}
\noindent {\bf{Case II :}} Assume that~$|Y|\geq 2$.  
By Theorem \ref{Z} there are two disjoint double rays 
\[{\mathcal R_1=[\ldots ,x_{-2},x_{-1}]1[x_1,x_2,\ldots]} \] and \[\mathcal R_2=[\ldots, y_{-2},y_{-1}]x[y_1,y_2,\ldots] \] 
where~$x_i,y_i,x\in Y^{\pm}$ such that the vertices of~$\mathcal R_1\cup\mathcal R_2$ cover all elements~$\mathbb Z$.
Since~$H$ is a normal subgroup of~$G$, we can conclude that~$g H= H g$. 
Thus the vertices of the set~$g C$ form a cycle for every~$g\in G$.
Now consider the double rays
\[P_1=\cdots[x_{-2}][c_1,\ldots, c_{t-1}] [s_{-1}]1 [c_1,\ldots, c_{t-1}] [x_1]][c_1,\ldots, c_{t-1}]\cdots \] 
and
\[P_2=\cdots[y_{-2} ][c_1,\ldots, c_{t-1}] [y_{-1}]x [c_1,\ldots, c_{t-1}] [y_1]][c_1,\ldots, c_{t-1}]\cdots.\] 
For easier notation we define~${a \defi c_1 \cdots c_{t-1}}$. 
We claim that~$P_1 \cap P_2 = \emptyset$. 
There are 4 possible cases such intersections. 
We only consider this one case, as the others are analog.
So assume to the contrary
\[ x \cdot  a y_1  \cdots  a y_{\ell_1} \cdot  c_1 \cdots  c_{\ell^\prime_1}=  a x_1  \cdots a x_{\ell_2} \cdot  c_1 \cdots  c_{\ell_2^\prime}.\]
Since~$H$ is a normal subgroup of~$G$, for every~$g\in G$  we have~${a g =g h}$ for some~${h \in H}$.
It follows that 
\begin{align*}
 x \cdot  a y_1  \cdots  a y_{\ell_1} \cdot  c_1 \cdots  c_{\ell^\prime_1}&=  a x_1  \cdots a x_{\ell_2} \cdot  c_1 \cdots  c_{\ell_2^\prime} \\
\Leftrightarrow x \cdot  y_1  \cdots   y_{\ell_1} h \cdot  c_1 \cdots  c_{\ell^\prime_1}&=  x_1  \cdots x_{\ell_2} h^{\prime} \cdot  c_1 \cdots  c_{\ell_2^\prime}  \textnormal{ for some $h,h^\prime \in H$} \\ 
\Leftrightarrow x \cdot  y_1  \cdots   y_{\ell_1} \bar{h} &=  x_1  \cdots x_{\ell_2} \bar{h}^{\prime}  \textnormal{ for some $\bar{h}, \bar{h}^\prime \in H$} \\ 
\Leftrightarrow (x_1  \cdots x_{\ell_2})^{-1} x \cdot  y_1  \cdots   y_{\ell_1}  &=   \bar{h}^{\prime} \bar{h}^{-1} 
\end{align*}
The left side of this equation again has finite order, but the right side has infinite order. 
It follows that
\begin{align*}
(x_1 \ldots x_i)^{-1} x y_1 \cdots y_j &= 1 \\
 x y_1 \cdots y_j &= x_1 \ldots x_i
\end{align*}
But this contradicts our assumption that~$\Rcal_1$ and~$\Rcal_2$ were disjoint. 
Therefore, as~${V(\mathcal P_1\cup \mathcal P_2)=V(\Gamma(G, X \cup Y))}$, the double rays~$\Pcal_1$ and~$\Pcal_2$ form the desired Hamilton circle. 
\end{proof}

\section{Multiended groups}
\label{multiended}

In this section we give a few insights into the problem of finding Hamilton circles in groups with more than two ends,  as well as showing a counter example for Problem~\ref{conj1}.   
We call a group to be a \emph{multiended group} if is has more than two ends. 
In 1993 Diestel, Jung and M\"oller \cite{DiestelJungMoeller} proved that any transitive graph with more than two ends has infinitely many ends\footnote{In this case the number of ends is uncountably infinite.} and as all Cayley graphs are transitive it follows that the number of ends of any group is either zero, one, two or infinite. 
This yields completely new challenges for finding a Hamilton circle in groups with more than two ends.
One famous example to illustrate the problems of finding a Hamilton circles in an infinite graph with infinitely many ends is the Wild Circle~\cite[Figure 8.5.1]{diestelBook10noEE}. 
Thus studying graph with more than two ends to find Hamilton circles is more complicated than just restricting one-self to two-ended groups.

\subsection{Disprove Problem 1}
\label{disprove agelos}
We now give an example of an infinite Cayley graph that disproves Problem~\ref{conj1}. 
Define~${G_1 \defi G_2 \defi  \bbZ_3 \times \bbZ_2}$.
Let~${\Gamma \defi \Gamma(G_1 \free_{\bbZ_2} G_2)}$. 
Let~${G_1 = \langle a,b \rangle}$ and~${G_2 = \langle a,c \rangle}$ where the order of~$a$ is two and the orders of~$b$ and~$c$, respectively, are three. 
In the following we show that the assertion of Problem~\ref{conj1} holds for~$\Gamma$ and we show that~$|\Gamma|$ does not contain a Hamilton circle. 

In the following we will show that~$\Gamma$ does not contain a Hamilton circle. 
For that we use the following well-known lemma and theorem.

\begin{lemma}
{\rm \cite[Lemma 8.5.5]{diestelBook10noEE}}
\label{top_conn1}
If $\Gamma$ is a locally finite connected graph, then a standard subspace
\footnote{A standard subspace of~$|\Gamma|$ is a subspace of~$|\Gamma|$ 
that is a closure of a subgraph of~$\Gamma$.} of $|\Gamma|$ is topologically connected (equivalently: arc-connected) if and only if it contains an edge from every finite cut of $\Gamma$ of which it meets both sides.
\end{lemma}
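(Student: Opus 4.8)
The statement is a three-way equivalence; since for a locally finite connected $\Gamma$ it is already known that a topologically connected standard subspace of $|\Gamma|$ is arc-connected (see~\cite{diestelBook10noEE}), and since arc-connectedness trivially implies topological connectedness, the plan is to prove the remaining equivalence: a standard subspace $X=\overline{H}$ of $|\Gamma|$ is topologically connected if and only if it meets the stated cut condition. The workhorse throughout is the elementary observation that for a bipartition $(A,B)$ of $V(\Gamma)$ with $E(A,B)$ finite the space $|\Gamma|$ splits — because $\Gamma$ is connected and locally finite — as the disjoint union of the two \emph{closed} sets $\overline{\Gamma[A]}$ and $\overline{\Gamma[B]}$ together with the (open) edges of the cut, each end of $\Gamma$ lying in exactly one of $\overline{\Gamma[A]}$, $\overline{\Gamma[B]}$; consequently a standard subspace that avoids every edge of the cut is contained in $\overline{\Gamma[A]}\cup\overline{\Gamma[B]}$.

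For the forward implication I would argue by contraposition: if $X$ avoids every edge of a finite cut $(A,B)$ whose two sides it both meets, then by the observation $X=(X\cap\overline{\Gamma[A]})\cup(X\cap\overline{\Gamma[B]})$ exhibits $X$ as the union of two disjoint nonempty sets each closed in $X$, so $X$ is disconnected. The converse is the substantial part, again handled by contraposition. Assume $X=\overline H$ is disconnected, $X=Y_1\cup Y_2$ with $Y_1,Y_2$ nonempty, disjoint and relatively clopen. First I would record that each $Y_i$ is again a standard subspace: it is the closure of the subgraph $H_i\subseteq H$ formed by the vertices and edges of $H$ lying in $Y_i$ (each edge of $H$ is connected, hence lies entirely in one $Y_i$; a point of $Y_i$ that is a limit of $H$ is a limit of $H_i$ since $Y_i$ is open in $X$). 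The goal is then to manufacture a finite cut $(A,B)$ with $Y_1\subseteq\overline{\Gamma[A]}$ and $Y_2\subseteq\overline{\Gamma[B]}$; such a cut is met on both sides by $X\supseteq Y_1\cup Y_2$ yet, by the observation, contains no edge of $X$, contradicting the cut condition.

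To build the cut I would use that $|\Gamma|$ is compact Hausdorff, hence normal, so $Y_1$ and $Y_2$ have disjoint open neighbourhoods $W_1\supseteq Y_1$ and $W_2\supseteq Y_2$; by compactness each $Y_i$ is covered by finitely many basic open sets contained in $W_i$ — balls around vertices, intervals inside edges, and sets $\hat C(S_\omega,\omega)$ around ends, with $S_\omega$ a finite vertex set. Let $S^{\ast}$ be the finite set of all vertices occurring in these basic sets (ball centres, edge endpoints, and the $S_\omega$'s). The point is that every component $D$ of $\Gamma-S^{\ast}$ whose closure-together-with-incident-half-edges meets $Y_1$ is in fact contained in $W_1$, and hence misses $Y_2$: a vertex of $D$ lying in $Y_1$, or an end of $Y_1$ with rays in $D$, must lie in one of the chosen basic sets, and the only ones that can contain vertices or ends other than their ``centre'' are the $\hat C(S_\omega,\omega)$, which forces $D\subseteq C(S_\omega,\omega)\subseteq W_1$; and because $Y_1$ is a standard subspace, an interior edge point of $Y_1$ in $D$'s region drags in the whole edge and thus a vertex of $D$ lying in $Y_1$, reducing to the previous case. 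Symmetrically for $Y_2$. Hence each component of $\Gamma-S^{\ast}$ can be consistently assigned to the side of whichever $Y_i$ it can meet (components meeting neither assigned arbitrarily), each vertex of $S^{\ast}$ assigned to the side of whichever $Y_i$ contains it, if any; letting $A$ be the union of the side-$Y_1$ components and side-$Y_1$ vertices of $S^{\ast}$, and $B$ the rest, one checks that $E(A,B)$ consists only of edges incident with $S^{\ast}$, hence is finite, and that $Y_1\subseteq\overline{\Gamma[A]}$ and $Y_2\subseteq\overline{\Gamma[B]}$, which is the contradiction sought.

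The main obstacle is exactly this confinement step: after passing to the enlarged finite set $S^{\ast}$, no component of $\Gamma-S^{\ast}$ may straddle $Y_1$ and $Y_2$, and it is this that makes the side-assignment well defined and forces $Y_1,Y_2$ into the closures of the respective sides. The two facts that do the work are that among the basic neighbourhoods only the end-neighbourhoods $\hat C(S,\omega)$ can contain ``foreign'' vertices or ends, and that a standard subspace contains each of its edges whole, so that an interior edge point cannot lie in $X$ without its endpoints doing so. The forward implication and the reduction to topological connectedness are, by comparison, routine.
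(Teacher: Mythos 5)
The paper does not prove this lemma at all: it is quoted verbatim from Diestel's book (Lemma 8.5.5 there) and used as a black box, so there is no in-paper argument to compare yours against. Judged on its own, your proof is correct and is essentially the standard textbook argument: the forward direction via the observation that a finite cut $E(A,B)$ splits $|\Gamma|$ into the disjoint closed sets $\overline{\Gamma[A]}$ and $\overline{\Gamma[B]}$ plus the open cut edges (each end lying on exactly one side), and the converse via compactness of the two clopen pieces, a finite subcover by basic open sets, and the resulting finite separator $S^{\ast}$. The one step a reader has to pause over is your confinement claim, but it does go through for exactly the reason you give: a vertex of a component $D$ of $\Gamma-S^{\ast}$ is by construction not a ball centre or edge endpoint of the cover, so it can only be captured by a set of the form $\hat C(S_\omega,\omega)$, which swallows all of $D$; together with the remark that a standard subspace contains each of its edges together with both endpoints, this makes the side-assignment well defined and yields the finite cut that $X$ meets on both sides but whose edges it avoids. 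You correctly leave the equivalence of topological and arc-connectedness for standard subspaces as an external citation, which is appropriate since that is a separate, genuinely hard theorem.
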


\begin{thm}{\rm \cite[Theorem 2.5]{TopSurveyI}}
\label{top_conn2}
The following statements are equivalent for sets $D \subseteq E(\Gamma)$:
\begin{enumerate}[\rm (i)]
\item $D$ is a sum of circuits in $|\Gamma|$.
\item Every vertex and every end has even degree in $D$.
\item $D$ meets every finite cut in an even number of edges.
\end{enumerate}
\end{thm}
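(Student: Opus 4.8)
The plan is to establish the cyclic chain $(\mathrm{i})\Rightarrow(\mathrm{iii})\Rightarrow(\mathrm{ii})\Rightarrow(\mathrm{i})$; I avoid proving $(\mathrm{ii})\Rightarrow(\mathrm{iii})$ directly because that step is awkward when both sides of a finite cut are infinite. For $(\mathrm{i})\Rightarrow(\mathrm{iii})$, write $D$ as a thin sum $\sum_{\lambda}C_\lambda$ of circuits, so that every edge of a given finite cut $F$ lies in only finitely many $C_\lambda$ and hence only finitely many $C_\lambda$ meet $F$. A single circuit, being the edge set of a circle in $|\Gamma|$, meets $F$ in an even number of edges: removing the open edges of $F$ from $|\Gamma|$ splits the circle into arcs each lying in the closure of one of the two sides of $F$, and consecutive arcs lie on opposite sides, so their number is even. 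Working modulo $2$, $|D\cap F|\equiv\sum_\lambda|C_\lambda\cap F|\equiv 0\pmod 2$.

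For $(\mathrm{iii})\Rightarrow(\mathrm{ii})$: for a vertex $v$, local finiteness makes the set of edges at $v$ a finite cut, so $D$ meets it evenly and $v$ has even degree in $D$. For an end $\omega$, choose a nested sequence $(F_n)$ of finite cuts that are the boundaries of a neighbourhood basis of $\omega$ in $|\Gamma|$; using the standard fact that the degree of $\omega$ in $D$ is sandwiched by the quantities $|D\cap F_n|$ (each $D$-ray to $\omega$ must cross every $F_n$, and a K\"onig-type argument recovers enough edge-disjoint rays), and since each $|D\cap F_n|$ is even, the degree of $\omega$ in $D$ is even or infinite.

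The substantial direction is $(\mathrm{ii})\Rightarrow(\mathrm{i})$, and for it the plan is to exhibit $D$ as an edge-disjoint union of circuits, which is automatically a thin sum. Fix a well-ordering of $D$ and extract circuits transfinitely: having removed some edge-disjoint family of circuits and reached a nonempty remainder $D'$, note that $D'$ still has every vertex- and end-degree even (each extracted circuit changes each such degree by $0$ or $2$), pick the least remaining edge $e$, and apply the key sub-lemma---\emph{a nonempty edge set in which every vertex and every end has even degree contains a circuit through any prescribed one of its edges}---to remove one more circuit; since each step destroys at least one edge, the process exhausts $D$. To prove the sub-lemma, trace a trail of $D'$-edges starting through $e$, always able to leave a vertex by the even-degree condition and never reusing an edge; the trail closes into a finite cycle through $e$, or is infinite, in which case---after truncating it so as to stay within a small neighbourhood of a single end $\omega$, using the finiteness of suitable cuts around $\omega$---one produces two edge-disjoint rays in $D'$ converging to $\omega$, which joined through $e$ and closed up at $\omega$ give an arc whose closure in $|\Gamma|$ is a circle; that this standard subspace really is arc-connected, hence a genuine circle, is where Lemma~\ref{top_conn1} enters.

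The main obstacle is precisely this sub-lemma: turning the purely combinatorial statement ``$\omega$ has even degree in $D'$'' into an honest circle through a prescribed edge requires controlling the topology of $|\Gamma|$ near ends---keeping the trail from oscillating among infinitely many ends, and verifying arc-connectedness of the resulting standard subspace via Lemma~\ref{top_conn1}. The two parity implications $(\mathrm{i})\Rightarrow(\mathrm{iii})$ and $(\mathrm{iii})\Rightarrow(\mathrm{ii})$ are routine bookkeeping by comparison. (In the paper this statement is quoted as Theorem~2.5 of the cited topological survey rather than proved; the outline above is how its proof runs in the Diestel--K\"uhn theory of the topological cycle space.)
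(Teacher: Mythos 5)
The paper does not prove this statement at all: it is quoted verbatim as Theorem~2.5 of the cited survey (the Diestel--K\"uhn/Bruhn--Stein characterization of the topological cycle space), so there is no in-paper argument to compare yours against. Judged on its own terms, your outline follows the right general strategy, but it has two genuine gaps, both located exactly where the cited theorem's real content lies. First, in $(\mathrm{iii})\Rightarrow(\mathrm{ii})$ you conclude that the edge-degree of an end $\omega$ in $D$ is ``even or infinite''. That does not establish (ii): ends of infinite edge-degree genuinely occur, and for them ``even degree'' has to be interpreted via the Bruhn--Stein relative parity (defined through the parities of $|D\cap F'|$ for sufficiently large finite cuts $F'$ around $\omega$), not via the cardinality of the degree. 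With that definition the implication is essentially true by construction, but your sandwich argument as written proves a weaker statement than (ii); this direction is a substantive theorem of Bruhn and Stein, not routine bookkeeping.

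Second, and more seriously, the sub-lemma driving $(\mathrm{ii})\Rightarrow(\mathrm{i})$ is not established by the trail argument you sketch. In an infinite graph a trail grown edge by edge from $e$ need never close up, and its two ends need not converge to a single end of $\Gamma$ --- they can oscillate among infinitely many ends; ``truncating it so as to stay within a small neighbourhood of a single end'' is precisely the step that has no obvious implementation and is where the whole difficulty sits. The actual proof of the decomposition into circuits does not extract one circuit at a time by trail-tracing: it works with the finite contraction minors $\Gamma/S$ (contract the components of $\Gamma-S$ for finite $S$), observes that (iii) makes the image of $D$ an element of each finite cycle space and hence a disjoint union of finite cycles there, and then uses K\"onig's infinity lemma to choose these finite decompositions compatibly and pass to the limit. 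Your transfinite extraction also needs, at limit stages, that the remainder retains even end-degrees after infinitely many circuits have been removed, which again runs into the infinite-degree parity issue. Since the paper only uses the easy direction $(\mathrm{i})\Rightarrow(\mathrm{iii})$ (a Hamilton circle meets every finite cut evenly), your first paragraph is the only part actually needed here, and that part is correct.
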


Applying Theorem \ref{top_conn2} it is enough to show that there is no set~${D \subseteq E(\Gamma)}$ that meets every finite cut in an even number and every vertex twice.
By using Lemma \ref{top_conn1} we can further conclude that, because circles are arc-connected, that such a~$D$ would have to meet every finite cut in at least one edge. 

We now assume for a contraction that there is a Hamilton circle in~$\Gamma$, so we assume that there is a~$D \subseteq E(\Gamma)$ that meets every finite cut in even and at least one edge, and induces degree two at every vertex.  
In the following we will now study two cases. 
In each case we will consider a few finite cuts in~$\Gamma$ that show that such a~$D$ cannot exist. 

Figures \ref{case1} and \ref{case2} display induced subgraphs of~$\Gamma$. 
The relevant cuts in those figures are the edges that cross the thick lines.
The cases we study are that~$D$ contains the dashed edges of the appropriate figure corresponding to the case, see Figures~\ref{case1} and~\ref{case2}.
For easier reference we call the two larger vertices the \emph{central vertices}.

{\bf Case 1:}
We now consider Figure \ref{case1}, so we assume that the edges from the central vertices into the `upper' side are one going to the left and the other to the right. 
First we note that the cut~$F$ ensures that the curvy  edge between the central vertices is not contained in~$D$. 
Also note that~$F$ ensures that the remaining two edges leaving the central vertices must go to the `lower' side of Figure \ref{case1}.
As the cuts~$B$ and~$C$ have to meet an even number of edges of~$D$ we may, due to symmetry, assume that the dotted edge is also contained in~$D$. 
This yields the contraction that the cut~$A$ now cannot meet any edge of~$D$.

\begin{figure}[ht]
\centering
\includegraphics[]{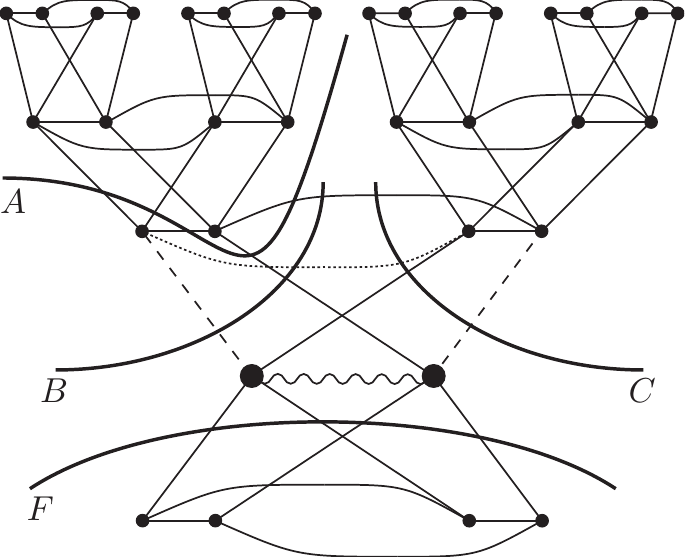}
\caption{Case 1}
\label{case1}
\end{figure}

{\bf Case 2:}
This case is very analog to Case 1. 
Again we may assume that the there are two edges leaving the central into the `upper' and the  `lower' side, each. 
The cut~$C$ ensures that~$D$ must contain both dotted edges. 
But this again yields the contraction that~$A$ cannot meet any edge in~$D$.

\begin{figure}[ht]
\centering
\includegraphics[]{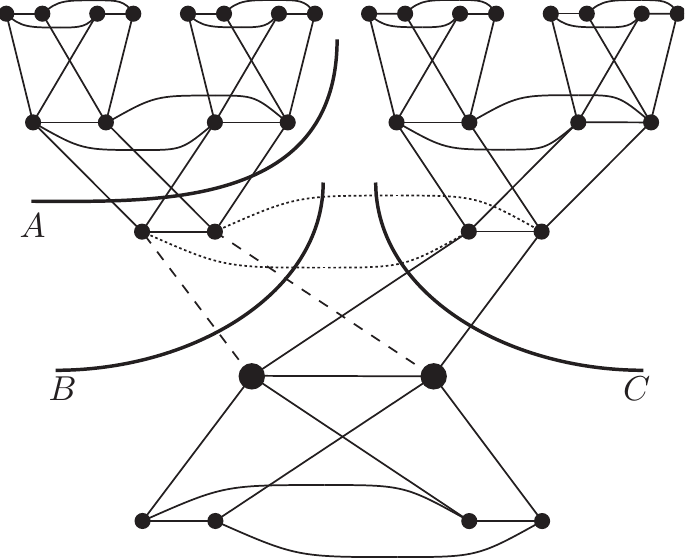}
\caption{Case 2}
\label{case2}
\end{figure}

It remains to show that~$G_1 \free_{\bbZ_2} G_2$ cannot be obtained as a free product with amalgamation over subgroups of size~$k$  of more than~$k$ groups.
If~$G_1 \free_{\bbZ_2} G_2$ were fulfilling the premise of Problem~\ref{conj1} then there would be a finite~${W \subset V(\Gamma)}$, say~${|W| =k}$, such that~${\Gamma \setminus W}$ has more than~$k$ components. 

We will now use induction on the size of~$W$. 
For a contraction we assume that such a set~$W$ exists.
For that we now introduce some notation to make the following arguments easier. 
In the following we will consider each group element as its corresponding vertex in~$\Gamma$. 
As~$\Gamma$ is transitive we may assume that~$1$ is contained in~$W$. 
Furthermore we may even assume that no vertex which starts with~$c$ is contained in~$W$.
Let~$X_i$ be the set of vertices in~$\Gamma$ that have distance exactly~$i$ from~$\{0,a\}$. 
We set~$W_i \defi X_i \cap W$. 
For~$x_i \in W_i$ let~$x_i^-$ be its neighbor in~$X_{i-1}$, note that this is unique. 
For a vertex~$x \in X_i$ let~$\bar{x}$ be the neighbor of~$x$ in~$X_i$ which is not~$xa$, note this will always be either~$xb$ or~$xc$. 
For a set~$Y$ of vertices of~$\Gamma$ let~$C_Y$ be the number of components of~${\Gamma \setminus Y}$. 

As~$\Gamma$ is obviously 2-connected the induction basis for~$|W| = 0$ or~$|W| =1$ holds trivially. 
\vspace{0,5cm}

We now assume that~$|W| = k$ and that for each~$W^\prime$ with~$|W^\prime | \leq | W |-1$ we know that~${C_{W^\prime} \leq |W^\prime|}$. 
In our argument we will remove sets of vertices of size~$l$ from~$W$ while decreasing~$C_W$ by at most~$l$.

Let~$x \in W$ be a vertex with the maximum distance to~$\{1,a\}$ in~$\Gamma$.
Say~$x \in X_j$ and define~${W_j \defi W \cap X_j}$. 
 
First we suppose that~$xa \notin W$. 
It is easy to see that removing~$x$ from~$W$ does not change~$C_W$. 
So we may assume that for every~${x \in W_j}$ the vertex~$xa$ is also in~$W$. 

We will now study all the possible cases for~$x^-$ and~$x^{-}a$. 

\begin{enumerate}
\item[Case 1:] $x^-, x^{-}a \notin W$: When we remove~$x$ and~$xa$ from~$W$ change~$C_W$ by at most~$1$ but reduced~$W$ by two elements, so we can assume this never happens.  
\item[Case 2:] $x^-, x^{-}a \in W$: When we remove~$x$ and~$xa$ from~$W$ we again decreased~$C_W$ by at most one while reducing~$|W|$ by two, so we again assume that this never happens.  
\item[Case 3:] $x^- \in W, x^{-}a \notin W$: By removing~$x,xa$ from~$W$ may reduce that~$C_W$ is reduced by one or two.
But as we have just decreased~$|W|$ by two both of these cases are fine and we are done.  
\end{enumerate}

\subsection{Closing Words}
\label{closing}
We still believe that it should be possible to find a condition on the size of the subgroup~$H$ to amalgamate over relative to the index of~$H$ in~$G_1$ and~$G_2$ such that the free product with amalgamation of~$G_1$ and~$G_2$ over~$H$ contains a Hamilton circle for the standard generating set. 
In addition it might be necessary to require some condition on the group~$G_1 / H$. 
We conjecture the following:

\begin{conj}
\label{conj:Multi}
There exists a function~$f: \bbN \rightarrow \bbN$ such that if~$G_1 = \langle S_1 \rangle$ and~$G_2 = \langle S_2 \rangle$ are finite groups with following properties: 
\begin{enumerate}[{\rm (i)}]
\item $[G_1 : H]=k$ and $[G_2 :H] =2$.
\item $|H| \geq f(k)$.
\item Each subgroup of~$H$ is normal in~$G_1$ and~$G_2$.
\item $\Gamma(G_1 /H, S/ H)$ contains a Hamilton cycle.  
\end{enumerate}
Then $\Gamma(G_1 \free_H G_2, S_1 \cup S_2)$ contains a Hamilton circle.  
\end{conj}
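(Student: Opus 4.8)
The natural plan is to generalise the proof of Theorem~\ref{infinite semihamilton}, replacing the two-ended cylinder machinery of Lemma~\ref{Cylinder}/\ref{altern_cylinder} by a tree-indexed analogue, and to read off $f$ from a toughness-type obstruction. The graph $\Gamma\defi\Gamma(G_1\free_H G_2,S_1\cup S_2)$ is modelled on the Bass--Serre tree of $G_1\free_H G_2$, in which cosets of $G_1$ are vertices of degree $[G_1:H]=k$ and cosets of $G_2$ are vertices of degree $[G_2:H]=2$; after suppressing the degree-$2$ vertices one gets a $k$-regular tree whose vertices carry copies of $\Gamma(G_1,S_1)$, consecutive copies being joined along a coset of $H$ through a thin $\Gamma(G_2,S_2)$-connector (these behave like the graphs in Lemma~\ref{ZigZag}, since $[G_2:H]=2$). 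A Hamilton circle passes through every end, hence must leave and re-enter each $\Gamma(G_1,S_1)$-copy once per incident branch; the interface to a branch is a coset of $H$, a finite cut, which by Theorem~\ref{top_conn2} and Lemma~\ref{top_conn1} the circle meets in an even number $\ge 2$ of edges. Comparing these crossings, summed over the $k$ cosets of $H$ inside a single copy, with the $|H|$ vertices available on each coset shows that $|H|$ must grow at least linearly in $k$; this is the role of hypothesis~(ii), and one expects $f(k)$ of order $2k$.

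\emph{Reduction.} Put $L\defi\langle(S_1\cup S_2)\cap H\rangle\le H$. By~(iii) every subgroup of $L$ is normal in $G_1$ and in $G_2$, so $L$ is Dedekind and $L\trianglelefteq G_1\free_H G_2$; by Remark~\ref{dedekind} each coset of $L$ induces a Hamilton cycle when $|L|\ge 3$, and induces $K_2$ or a point when $|L|\le 2$. Exactly as in Theorem~\ref{infinite semihamilton}, it then suffices to build a suitable Hamilton circle of the quotient $\overline\Gamma\defi\Gamma((G_1/L)\free_{H/L}(G_2/L),\overline{S_1}\cup\overline{S_2})$ and blow up each quotient-block into the Hamilton cycle of the corresponding $L$-coset, rerouting through the lifted matchings. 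In $\overline\Gamma$ every generator lies outside $H/L$, hypothesis~(iv) survives since $(G_1/L)/(H/L)\cong G_1/H$, and both indices are preserved; $f$ must be chosen so that the size requirement still holds after this quotient (this is one of the points where the precise form of $f$ has to be pinned down).

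\emph{The tree-cylinder step.} One now wants an analogue of Lemma~\ref{Cylinder}: if $V(\Gamma)$ partitions into finite blocks $X_t$, $t\in V(T)$, indexed by a tree $T$ of maximum degree $d$, all of one common size $m$ with $m$ at least roughly $2d$, each $\Gamma[X_t]$ carrying enough internal structure, with a perfect matching between $X_t$ and $X_{t'}$ for every $tt'\in E(T)$ and no edges between non-adjacent blocks, then $\Gamma$ has a Hamilton circle. The construction should follow a depth-first / Euler-tour pattern: root $T$, reserve two matching endpoints in each block towards its parent and two towards each child, and route through $\Gamma[X_t]$ so that the circle, having entered from the parent, makes one excursion into each child subtree and returns before going back up. In the application the blocks are the $\Gamma(G_1,S_1)$-copies with the $k$ incident cosets of $H$ cyclically ordered by a Hamilton cycle of $\Gamma(G_1/H,\cdot)$ (this is where~(iv) enters, and where $m\gtrsim 2d$ becomes $|H|\gtrsim 2k$), while the $\Gamma(G_2,S_2)$-connectors are threaded along the tree edges using the index-$2$ structure as in Lemma~\ref{ZigZag}.

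\emph{The main obstacle.} The hard part --- and presumably the reason this is still a conjecture --- is twofold. First, the local routing choices in the blocks must be made globally consistent so that their union is a \emph{single} circle: one has to guarantee that the resulting edge set has even degree at every vertex and every end, meets every finite cut, and is arc-connected (Theorem~\ref{top_conn2}, Lemma~\ref{top_conn1}), rather than falling apart into several disjoint circles or dangling rays. Second, routing inside a block needs more than ``$\Gamma[X_t]$ is a Hamilton cycle'': it asks for a spanning system of vertex-disjoint paths of $\Gamma(G_1,S_1)$ with prescribed terminals on the interface cosets --- a Hamilton-connectedness / Hamilton-laceability type property --- and it is exactly this property, its precise statement, its derivation from hypotheses~(iii)--(iv) (using that $H$ is Dedekind and $G_1/H$ is Hamiltonian), and the resulting optimal value of $f$, that constitute the real work.
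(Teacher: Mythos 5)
The statement you are addressing is Conjecture~\ref{conj:Multi}: the paper offers no proof of it, only the motivating discussion in Section~\ref{closing}, so there is no argument of the authors to compare yours against. What you have written is a programme rather than a proof, and you say so yourself; the two items you flag under ``the main obstacle'' are not technicalities but the entire mathematical content of the conjecture, and they remain open in your write-up. Concretely: (1)~The tree-indexed analogue of Lemma~\ref{Cylinder} is never proved. The paper's cylinder lemmas produce a Hamilton circle as the union of exactly two disjoint spanning double rays, and the verification that this union is a circle leans on the graph having exactly two ends, each double ray having one tail in each. Over a $k$-regular tree with $k\geq 3$ the end space is a Cantor set; your depth-first routing produces an edge set $D$ for which you would still have to verify, via Theorem~\ref{top_conn2} and Lemma~\ref{top_conn1}, that every vertex \emph{and every end} has degree two in $D$, that $D$ meets every finite cut, and that the standard subspace $\overline{D}$ is a single circle rather than a disjoint union of infinitely many circles or a space with ends of the wrong degree --- and ruling out that decomposition is precisely what the depth-first heuristic does not by itself do. (2)~The local routing inside a $\Gamma(G_1,S_1)$-block asks for a spanning family of vertex-disjoint paths with prescribed terminals distributed over the $k$ cosets of $H$; this Hamilton-laceability-type property is strictly stronger than hypothesis~(iv) together with the Dedekind condition, and you nowhere derive it from the hypotheses.

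A further soft spot is the claim that a cut-counting argument ``shows that $|H|$ must grow at least linearly in $k$'' and that $f(k)$ should be of order $2k$. The count is not carried out, and it is not clear it captures the true obstruction: each coset of $H$ can absorb up to $2|H|$ crossing edges, so the parity and degree constraints alone do not obviously force $|H|\gtrsim 2k$, and the paper's own counterexample in Section~\ref{disprove agelos} (where $k=3$, $|H|=2$) is excluded not by a toughness count but by the ad hoc cut analysis of Figures~\ref{case1} and~\ref{case2}. Identifying the correct $f$ --- i.e.\ the correct quantitative obstruction --- is part of what the conjecture asks for, and your sketch does not supply it. The reduction step via $L\defi\langle (S_1\cup S_2)\cap H\rangle$ is the one portion that does transfer soundly from the proof of Theorem~\ref{infinite semihamilton}, but it only relocates the problem to the quotient; it does not advance the two essential gaps above.
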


\bibliographystyle{plain}
\bibliography{collective.bib}

\begin{thebibliography}{10}

\bibitem{BabaiLovasz}
L.~Babai.
\newblock Automorphism groups, isomorphism, reconstruction.
\newblock {\em Handbook of Combinatorics}, Vol. 2:1447--1540, 1996.

\bibitem{bogo}
O.~Bogopolski.
\newblock {\em Introduction to group theory}.
\newblock European Mathematical Society (EMS). EMS Textbooks in Mathematics,
  Z\"urich, 2008.

\bibitem{ChenQuimpo}
C.~C. Chen and N.~Quimpo.
\newblock Hamilton cycles in cayley graphs over hamiltonian groups.
\newblock {\em Research Report}, No. 80, 1983.

\bibitem{diestelBook10noEE}
R.~Diestel.
\newblock {\em {Graph Theory}}.
\newblock Springer, 4th edition, 2010.

\bibitem{TopSurveyI}
R.~Diestel.
\newblock Locally finite graphs with ends: a topological approach. {I}.\
  {B}asic theory.
\newblock {\em Discrete Math.}, 311:1423--1447, 2011.

\bibitem{DiestelJungMoeller}
R.~Diestel, H.~A. Jung, and R.~G. M\"oller.
\newblock On vertex transitive graphs of infinite degree.
\newblock {\em Archiv der Mathematik}, 60(6):591, 1993.

\bibitem{Ends}
R.~Diestel and D.~K{\"u}hn.
\newblock Graph-theoretical versus topological ends of graphs.
\newblock {\em J.~Combin.\ Theory (Series B)}, 87:197--206, 2003.

\bibitem{Dragan}
M.~Dragan.
\newblock Hamiltonian circuits in cayley graphs.
\newblock {\em Discrete Math.}, 46(no. 1):49--54, 1983.

\bibitem{Durnberger1983}
E.~Durnberger.
\newblock Connected cayley graphs of semi-direct products of cyclic groups of
  prime order by abelian groups are hamiltonian.
\newblock {\em Discrete Math.}, 46:55--68, 1983.

\bibitem{AgelosFleisch}
A.~Georgakopoulos.
\newblock Infinite hamilton cycles in squares of locally finite graphs.
\newblock {\em Advances in Mathematics}, 220:670--705, 2009.

\bibitem{hamlehpott}
M.~Hamann, F.~Lehner, and J.~Pott.
\newblock Extending cycles locally to hamilton cycles.
\newblock {\em Electronic Journal of Combinatorics}, 23(Paper 1.49), 2016.

\bibitem{Karl1}
K.~Heuer.
\newblock A sufficient condition for hamiltonicity in locally finite graphs.
\newblock {\em Europ. J. Combinatorics}, pages 97--140, 2015.

\bibitem{Karl2}
K.~Heuer.
\newblock A sufficient local degree condition for hamiltonicity in locally
  finite claw-free graphs.
\newblock {\em European Journal of Combinatorics}, 55 Issue C:82--99, 2016.

\bibitem{commutatorsubgroup}
K.~Keating and D.~Witte.
\newblock On hamiltonian cycle in cayley graphs in groups with cyclic
  commutator subgroup.
\newblock In {\em Cycles in Graphs}, volume 115, pages 89--102. North-Holland,
  1985.

\bibitem{mei}
J.~Meier.
\newblock {\em Groups, Graphs and Trees: An Introduction to the Geometry of
  Infinite Groups}.
\newblock Cambridge, 2008.

\bibitem{nash}
C.St.J.A. Nash-Williams.
\newblock Abelian groups, graphs, and generalized knights.
\newblock {\em Proc. Camb. Phil. Soc.}, 55:232--238, 1959.

\bibitem{ScottWall}
P.~Scott and T.~Wall.
\newblock Topological methods in group theory, homological group theory.
\newblock {\em (Proc. Sympos., Durham, 1977), London Math. Soc. Lecture Note
  Ser., vol. 36}, Cambridge Univ. Press, Cambridge-New York:137--203, 1979.

\bibitem{scott}
W.~R. Scott.
\newblock {\em Group Theory}.
\newblock Prentice-Hall, Englewood Cliffs, NJ,, 1964.

\bibitem{Marusic}
D.~Maru\v si\v c.
\newblock Hamiltonian circuits in cayley graphs.
\newblock {\em Discrete Math}, 46:49--54, 1983.

\bibitem{wittedigraphs}
D.~Witte.
\newblock On hamiltonian circuits in cayley diagrams.
\newblock {\em Discrete Math.}, 38:99--108, 1982.

\bibitem{p-group}
D.~Witte.
\newblock Cayley digraphs of prime-power order are hamiltonian.
\newblock {\em J. Combin. Theory Ser. B}, 40:107--112, 1986.

\bibitem{wittesurvey}
D.~Witte and J.A. Gallian.
\newblock A survey: Hamiltonian cycles in cayley graphs.
\newblock {\em Discrete Math}, 51:293--304, 1984.

\end{thebibliography}

   \end{document}